
\documentclass{aic}

\aicAUTHORdetails{%
  title = {Ramsey Goodness of Books Revisited}, 
  author = {Jacob Fox, Xiaoyu He, and Yuval Wigderson},
  plaintextauthor = {Jacob Fox, Xiaoyu He, Yuval Wigderson},
    %
    %
    %
    %
    %
   %
  keywords = {Ramsey theory, Ramsey goodness, book graphs},
}   

\usepackage{amssymb, amsmath, amsthm, graphicx}
\usepackage[left=1in,top=1in,right=1in]{geometry}
\usepackage{todonotes}
\usepackage{comment,enumitem,mathtools}
\usepackage[capitalize]{cleveref}
\usepackage{color}

\theoremstyle{plain}
      \newtheorem{theorem}{Theorem}[section]
      \newtheorem{lemma}[theorem]{Lemma}
            \newtheorem{claim}[theorem]{Claim}

      \newtheorem{conjecture}[theorem]{Conjecture}
      \newtheorem*{thm:main1}{Theorem \ref{thm:main1}}
      \newtheorem*{thm:multipartite}{Theorem \ref{thm:multipartite}}
      \newtheorem*{thm:removal-above-threshold}{Theorem \ref{thm:removal-above-threshold}}
\theoremstyle{definition}
      \newtheorem{definition}[theorem]{Definition}

\theoremstyle{remark}

\newcommand\ab[1]{\lvert#1\rvert}
\newcommand\up[1]{^{(#1)}}
\newcommand\ol[1]{\overline{#1}}
\newcommand\eps{\varepsilon}

\DeclareMathOperator{\pr}{Pr}
\newcommand\E{\mathbb{E}}

\DeclareMathOperator{\ext}{ext}

\newcommand\book[2]{B_{#1, #2}}

\aicEDITORdetails{%
   year={2023},
   number={4},
   received={27 September 2021},   
   published={29 July 2023},  
   doi={10.19086/aic.2023.4},      
}   

\begin{document}

\begin{frontmatter}[classification=text]


\author[jfox]{Jacob Fox\thanks{Department of Mathematics, Stanford University, Stanford, CA 94305, USA. Email: \url{jacobfox@stanford.edu}. Research supported by a Packard Fellowship and by NSF grants DMS-1953990 and DMS-2154129.}}
\author[xhe]{Xiaoyu He\thanks{Department of Mathematics, Princeton University, Princeton, NJ 08544, USA. Email: \url{xiaoyuh@princeton.edu}. Research supported by the NSF Mathematical Sciences Postdoctoral Research Fellowships Program under Grant DMS-2103154.}}
\author[ywig]{Yuval Wigderson\thanks{School of Mathematics, Tel Aviv University, Tel Aviv 69978, Israel. Email: \url{yuvalwig@tauex.tau.ac.il}. Research supported by NSF GRFP Grant DGE-1656518, NSF-BSF Grant 20196, and by ERC Consolidator Grant 863438 and ERC Starting Grant 101044123.}}

\begin{abstract}
The Ramsey number $r(G,H)$ is the minimum $N$ such that every graph on $N$ vertices contains $G$ as a subgraph or its complement contains $H$ as a subgraph.
For integers $n \geq k \geq 1$, the $k$-book $\book kn$ is the graph on $n$ vertices consisting of a copy of $K_k$, called the \emph{spine}, as well as $n-k$ additional vertices each adjacent to every vertex of the spine and non-adjacent to each other.
A connected graph $H$ on $n$ vertices is called {\it $p$-good} if $r(K_p,H)=(p-1)(n-1)+1$. Nikiforov and Rousseau proved that if $n$ is sufficiently large in terms of $p$ and $k$, then $\book kn$ is $p$-good. Their proof uses Szemer\'edi's regularity lemma and gives a tower-type bound on $n$. We give a short new proof that avoids using the regularity method and shows that every $\book kn$ with $n \geq 2^{k^{10p}}$ is $p$-good. 

Using Szemer\'edi's regularity lemma, Nikiforov and Rousseau also proved much more general goodness-type results, proving a tight bound on $r(G,H)$ for several families of sparse graphs $G$ and $H$ as long as $|V(G)| < \delta |V(H)|$ for a small constant $\delta > 0$. Using our techniques, we prove a new result of this type, showing that $r(G,H) = (p-1)(n-1)+1$ when $H =\book kn$ and $G$ is a complete $p$-partite graph whose first $p-1$ parts have constant size and whose last part has size $\delta n$, for some small constant $\delta>0$. Again, our proof does not use the regularity method, and thus yields double-exponential bounds on $\delta$.
\end{abstract}
\end{frontmatter}


\section{Introduction}
For two graphs $G,H$, their \emph{Ramsey number} $r(G,H)$ is the smallest $N$ such that every graph $\Gamma$ on $N$ vertices contains $G$ as a subgraph, or its complement contains $H$ as a subgraph. The existence of $r(G,H)$ is guaranteed by Ramsey's theorem \cite{Ramsey}. The most well-studied Ramsey number is the \emph{diagonal} Ramsey number $r(K_k, K_k)$. One of the oldest (and easiest) results in Ramsey theory is the fact that $r(K_k,K_k) \geq (k-1)^2+1$ (this is implicit already in the work of Erd\H{o}s--Szekeres~\cite{ErSz}), which is proved by taking $\Gamma$ to be the complete balanced $(k-1)$-partite graph on $(k-1)^2$ vertices.

This quadratic lower bound is far from best possible. Indeed, it is known \cite{Erdos47,ErSz} that $r(K_k,K_k)$ must grow exponentially in $k$, though the exact exponential rate remains unknown despite decades of intense research. Nonetheless, it is an instance of a much more general inequality which can be tight. Write $\chi(G)$ for the chromatic number of $G$, and $s(G)$ for the minimum size of a color class in any proper coloring of $G$ with $\chi(G)$ colors. The inequality in question is then
\begin{equation}\label{eq:goodness}
r(G,H) \ge (\chi(G)-1)(\ab{V(H)}-1)+s(G),
\end{equation}
which holds under the assumption that $H$ is a connected graph with at least $s(G)$ vertices. Inequality (\ref{eq:goodness}) was first proved by Burr~\cite{Burr}, by taking $\Gamma$ to be a complete $\chi(G)$-partite graph with $\chi(G)-1$ parts of size $\ab{V(H)}-1$ and one part of size $s(G)-1$. 

Burr and Erd\H os \cite{BuEr} initiated the study of when (\ref{eq:goodness}) is tight; following their terminology, one says that a connected graph $H$ is \emph{$G$-good} if (\ref{eq:goodness}) is tight. In case $G=K_p$, one says that $H$ is \textit{$p$-good}, rather than $K_p$-good.

While the Ramsey goodness bound (\ref{eq:goodness}) is far from tight in the case of cliques, it turns out that many interesting graphs are $p$-good, and that the theory of Ramsey goodness generalizes many important results in graph theory. For example, Tur\'an's theorem, which states that the balanced complete $(p-1)$-partite graph has the most edges among all $K_p$-free graphs on $N$ vertices, is equivalent to the fact that stars are $p$-good. 
Extending this fact, Chv\'atal~\cite{Chvatal} proved that all trees are $p$-good for all $p \geq 3$, and this theorem inspired Burr and Erd\H os to define Ramsey goodness. At this point, there is a rich theory of Ramsey goodness, about which we refer the interested reader to the survey of Conlon, Sudakov, and the first author \cite[Section 2.5]{CoFoSu15}.

For $n \geq k \geq 1$, the $k$-book $\book kn$ on $n$ vertices consists of a copy of $K_k$, called the \emph{spine}, as well as $n-k$ additional vertices each joined to every vertex of the spine; equivalently, $\book kn$ consists\footnote{We remark that other notation exists for book graphs; notably, some other papers (e.g.\ \cite{Conlon,CFW,NR04}) use $B_{n-k}\up k$ to denote what is $\book kn$ in our notation.} of $n-k$ cliques of order $k+1$ sharing a common $K_k$. Book graphs arise naturally in the study of Ramsey numbers. Indeed, Ramsey \cite{Ramsey} originally proved the finiteness of $r(K_k,K_k)$ by proving the finiteness of $r(\book kn, \book kn)$ for every $n$, and it was observed by Erd\H os, Faudree, Rousseau, and Schelp \cite{ErFaRoSc} that the classical Erd\H os--Szekeres \cite{ErSz} upper bound on Ramsey numbers can also be framed as an upper bound on certain book Ramsey numbers. This connection yields an important approach to improving upper bounds on $r(K_k,K_k)$; for more details, see e.g.\ \cite{Conlon,CFW}.

In \cite{NR04}, Nikiforov and Rousseau used Szemer\'edi's regularity lemma to prove that for every $k,p \geq 2$ and every sufficiently large $n$, the book $\book kn$ is $p$-good. One consequence of applying the regularity method is that their proof yields tower-type bounds on how large $n$ must be in terms of $k$ and $p$, and they raised the question of what the best possible $n$ is. Our first main result is a new proof of $p$-goodness for books which avoids the use of the regularity lemma, and thus gets a much better dependence for $n$ on $k$ and $p$.
\begin{theorem}\label{thm:main1}
For all $k,p \geq 2$, if $n \geq 2^{k^{10p}}$, then $\book kn$ is $p$-good; that is, $$r(K_p,\book kn) = (p-1)(n-1)+1.$$
\end{theorem}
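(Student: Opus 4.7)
The lower bound $r(K_p,\book kn)\geq (p-1)(n-1)+1$ is Burr's construction (take $\Gamma$ to be the balanced $(p-1)$-partite Tur\'an graph on $(p-1)(n-1)$ vertices, whose complement is $p-1$ disjoint $K_{n-1}$'s and thus cannot contain the connected graph $\book kn$ on $n$ vertices), so the content is the matching upper bound. My plan is induction on $p$, with the trivial base case $p=2$: a $K_2$-free graph on $n$ vertices has no edges, so $\overline\Gamma=K_n\supseteq\book kn$.

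For the inductive step, let $\Gamma$ be $K_p$-free on $N=(p-1)(n-1)+1$ vertices. The first dichotomy is on $\Delta(\Gamma)$. If some $v$ has $\deg_\Gamma(v)\geq(p-2)(n-1)+1$, then $\Gamma[N(v)]$ is $K_{p-1}$-free on at least $(p-2)(n-1)+1$ vertices; since $n\geq 2^{k^{10p}}$ comfortably exceeds the threshold $2^{k^{10(p-1)}}$ demanded by the outer inductive hypothesis for $p-1$, that hypothesis yields $\book kn\subseteq\overline{\Gamma[N(v)]}\subseteq\overline\Gamma$. Otherwise $\Delta(\Gamma)\leq(p-2)(n-1)$, so every vertex has at least $n-1$ non-neighbors in $\Gamma$; this is the main case.

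In this main case I would run a secondary induction on $k$. The base $k=1$ is immediate: any vertex with $\geq n-1$ non-neighbors is the center of a star $K_{1,n-1}=\book 1 n$ in $\overline\Gamma$. For the inductive step, if some vertex $v$ has non-degree at least $(p-1)(n-2)+1$, then $\Gamma[\overline N(v)]$ is $K_p$-free on enough vertices to invoke the $(k-1)$-case of the theorem (valid since $n-1\geq 2^{(k-1)^{10p}}$), yielding $\book{k-1}{n-1}$ in its complement; prepending $v$ to that spine gives the desired $\book kn$.

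The main obstacle is the remaining sub-case, where every vertex has non-degree in the narrow window $[n-1,(p-1)(n-2)]$, forcing $\Gamma$ to be structurally close to the Tur\'an graph $T(N,p-1)$. We may also assume $\Gamma$ contains a $K_{p-1}$ (otherwise $\Gamma$ itself is $K_{p-1}$-free on $\geq (p-2)(n-1)+1$ vertices and we finish by the outer inductive hypothesis). Fix such a clique $\{u_1,\dots,u_{p-1}\}$ and partition $V\setminus\{u_1,\dots,u_{p-1}\}$ into classes $V_1,\dots,V_{p-1}$ by assigning each $v$ to some $V_i$ with $v\not\sim u_i$; by pigeonhole a largest class has $\geq n-1$ vertices. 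The subclass $V_i^\ast:=V_i\cap\bigcap_{j\neq i}N(u_j)$ is forced to be independent in $\Gamma$ (else, together with $\{u_j\}_{j\neq i}$, it builds a $K_p$), so if $|V_i^\ast|\geq n$ for some $i$, any $k$-subset of $V_i^\ast$ is an independent $k$-set in $\Gamma$ with at least $n-k$ common non-neighbors inside $V_i^\ast$, giving $\book kn$ in $\overline\Gamma$. Bounding the pairwise overlaps $|V_i\cap V_j|$ sharply enough to guarantee a large $V_i^\ast$ appears to require an iterated clean-up step that loses a multiplicative factor per iteration, which is what I expect to produce the double-exponential threshold $n\geq 2^{k^{10p}}$.
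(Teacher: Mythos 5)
Your scaffolding is reasonable: the lower bound is standard, the outer induction on $p$ with the $\Delta(\Gamma)$-dichotomy is correct, and the secondary induction that peels off a vertex of large non-degree to reduce $(k,n)\to(k-1,n-1)$ also works (the threshold check $n-1\geq 2^{(k-1)^{10p}}$ is fine). But the final sub-case, which you acknowledge is the ``main obstacle,'' is where essentially all the content of the theorem lives, and the route you sketch for it does not work.

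The concrete failure is this: your plan requires $|V_i^\ast|\geq n$ for some $i$, and since $V_i^\ast$ is independent in $\Gamma$, this is at least as strong as demanding that $\Gamma$ contain an independent set of size $n$. A $K_p$-free graph on $N=(p-1)(n-1)+1$ vertices need not have such a set, even under your extra degree constraints. Take $p=3$ and $\Gamma=C_{2n-1}$, the odd cycle on $N=2n-1$ vertices: it is triangle-free, has $\Delta=2\leq(p-2)(n-1)$, has every non-degree equal to $2n-4\leq(p-1)(n-2)$, so it sits squarely in your main sub-case, yet $\alpha(C_{2n-1})=n-1<n$. For any edge $\{u_1,u_2\}$ you pick, $V_1^\ast$ and $V_2^\ast$ are singletons, and no amount of ``cleaning up overlaps'' can manufacture an independent set that does not exist. (This also shows your intermediate claim, that the degree window forces $\Gamma$ to be ``structurally close to $T(N,p-1)$,'' is false: the constraint $\deg\geq p-1$ at the low end is far too weak to force Tur\'an-like structure.) So the iterated clean-up you gesture at cannot be made to close this case.

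What the paper does instead, and what your argument is missing, is to build an \emph{induced} $K_{p-1}(k)$ inside $\Gamma$ --- a blow-up of $K_{p-1}$ with parts of size $k$ --- rather than a bare $K_{p-1}$. Once one has an induced $K_{p-1}(k)$ with parts $V_1,\dots,V_{p-1}$, each part is itself a $k$-vertex independent set and can serve as the \emph{spine} of a book; the pages then come for free by pigeonhole, since every vertex of the $K_p$-free graph $\Gamma$ is non-adjacent to all of some $V_i$ (this is \cref{lem:simple}). Your version tries to extract both spine and pages from $V_i^\ast$, which is why it needs $\alpha(\Gamma)\geq n$; the paper's version only needs $k$ spine vertices per part, already supplied by the blow-up. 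Of course this shifts all the difficulty into producing the induced $K_{p-1}(k)$, which is the real work: the paper does it with a greedy embedding lemma (\cref{lem:greedy}) that, starting from an independent set obtained via Ramsey's theorem, repeatedly finds a large common neighborhood across the current parts (using the absence of a big book in $\ol\Gamma$) and then a new independent part inside it (using $K_p$-freeness and Ramsey again), iterated $p-1$ times in \cref{lem:find-blowup}. That iterated construction is also the source of the double-exponential bound, not the overlap bookkeeping you conjecture. You will need some version of this blow-up--building step to complete the argument.
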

Our main technique is a novel variant of the greedy embedding strategy, which allows us to build a large induced copy of a complete multipartite graph inside a $K_p$-free graph whose complement does not contain a very large book. We do not expect the bound $n \geq 2^{k^{10p}}$ to be best possible, and we discuss this further in the concluding remarks.

Extending the techniques from \cite{NR04}, Nikiforov and Rousseau \cite{NR09} were later able to prove a remarkable theorem, which remains the most general result in the field of Ramsey goodness. It immediately implies that many families of graphs are $p$-good, such as clique subdivisions and sufficiently large planar graphs (see \cite[Theorem 1.2]{MR3480560}). As the result in its full generality requires some notation, we state only the following special case, which applies only to book graphs.

\begin{theorem}[{Nikiforov and Rousseau \cite[Theorem 2.12]{NR09}}] \label{thm:book-book}
    For every $k,p\geq 2$, there exists some $\delta>0$ such that for all sufficiently large $n$,
    \[
        r(\book {p-1}{\delta n}, \book kn) = (p-1)(n-1)+1.
    \]
\end{theorem}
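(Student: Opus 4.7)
I would begin by noting that the lower bound $r(B_{p-1,\delta n}, B_{k,n}) \geq (p-1)(n-1)+1$ follows immediately from inequality~(\ref{eq:goodness}), applied with $\chi(B_{p-1,\delta n}) = p$ and $s(B_{p-1,\delta n}) = 1$. A useful reformulation for the upper bound is that $B_{p-1,\delta n}$ is the complete $p$-partite graph whose first $p-1$ parts are singletons (the $K_{p-1}$ spine) and whose last part is an independent set of $\delta n - (p-1)$ pages, each adjacent to all spine vertices. To find $B_{p-1,\delta n}$ in a graph $\Gamma$ it therefore suffices to locate a $K_{p-1}$ in $\Gamma$ whose common $\Gamma$-neighborhood contains an independent set of size at least $\delta n - (p-1)$.

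Now consider a graph $\Gamma$ on $N = (p-1)(n-1)+1$ vertices with $\bar\Gamma$ containing no $B_{k,n}$. My first step would be to apply the greedy embedding lemma from the proof of Theorem~\ref{thm:main1} to produce an induced complete $(p-1)$-partite subgraph $K \subseteq \Gamma$ with parts $V_1, \ldots, V_{p-1}$. Arguing exactly as in that proof, $|V_i| \leq n-1$ for every $i$, since otherwise $V_i$ itself is an independent set of size $n$ in $\Gamma$, yielding $B_{k,n}$ in $\bar\Gamma$. Any choice $v_i \in V_i$ automatically forms a $K_{p-1}$ spine, and since $v_i$ is non-adjacent in $\Gamma$ to every other vertex of $V_i$, the common $\Gamma$-neighborhood of $\{v_1, \ldots, v_{p-1}\}$ inside $V(K)$ is empty. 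Any candidate independent set of pages must therefore come from $U := V(\Gamma) \setminus V(K)$.

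The heart of the argument is to find, for some spine, a large independent set in the common neighborhood restricted to $U$. A natural attempt is a double-counting: summing over spines $(v_1, \ldots, v_{p-1}) \in V_1 \times \cdots \times V_{p-1}$ the number of $u \in U$ in the common neighborhood yields $\sum_{u \in U} \prod_{i=1}^{p-1} |N(u) \cap V_i|$, and averaging shows that some spine achieves a common neighborhood in $U$ of size at least $\sum_{u} \prod_i |N(u) \cap V_i| / \prod_i |V_i|$. Invoking the $B_{k,n}$-freeness of $\bar\Gamma$ on the independent set $\{u\} \cup (V_i \setminus N(u))$ gives $|N(u) \cap V_i| \geq |V_i| - (n-2)$, so when each $|V_i|$ is close to $n-1$ every $u \in U$ is adjacent to most of each $V_i$, and the averaging produces a spine with $\Omega(|U|)$ common neighbors in $U$. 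A further Ramsey-type extraction -- exploiting the $B_{k,n}$-freeness of $\bar\Gamma$ restricted to this common neighborhood -- should then produce an independent set in $\Gamma$ of size $\delta n - (p-1)$, completing the proof.

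The hard part will be the quantitative balance. In the saturated regime of the embedding from Theorem~\ref{thm:main1}, one has $|V(K)| = (p-1)(n-1) = N-1$ and $|U| = 1$, far too small to host an independent page set of size $\delta n$. To remedy this, the greedy embedding must be halted early enough to guarantee $|U| \geq \delta n$, but this comes at the cost of shrinking some $|V_i|$ below $n-1$, which weakens the bound $|N(u) \cap V_i| \geq |V_i| - (n-2)$ and risks rendering the averaging vacuous. Balancing the sizes of $U$, the parts $V_i$, and the final independent page set is the delicate point of the argument, and is what forces $\delta$ to be double-exponentially small in $k$ and $p$.
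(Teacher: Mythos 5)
Your overall framework is reasonable: fix $\Gamma$ that is $B_{p-1,\delta n}$-free and whose complement is $B_{k,n}$-free, and aim for a contradiction. You correctly observe that it would suffice to find a $K_{p-1}$ whose common $\Gamma$-neighborhood is large. (A minor over-strengthening: you ask for an independent set of pages, but since ``copy'' means not-necessarily-induced subgraph, you only need $\delta n - (p-1)$ common neighbors, not independence among them.) The lower bound is indeed immediate from~(\ref{eq:goodness}).

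However, the averaging step is broken, and this is not a mere matter of ``delicate quantitative balance'' as you suggest. The bound you derive, $|N(u)\cap V_i| \geq |V_i|-(n-2)$, is essentially vacuous in your setting. First, when $|V_i|=n-1$ it yields only $|N(u)\cap V_i| \geq 1$, which is nowhere near ``adjacent to most of $V_i$'': to make the averaged product $\prod_i |N(u)\cap V_i|/|V_i|$ bounded away from zero you would need $|N(u)\cap V_i| \geq (1-\varepsilon)|V_i|$ for each $i$, and that would require $|V_i| \geq (n-2)/\varepsilon$, i.e.\ parts \emph{larger} than $n$, which cannot happen. Second, and more fundamentally, the greedy embedding of \cref{lem:find-blowup} does not produce parts $V_i$ of size near $n-1$; the constraint $|S| \geq z^z\, r(G,K_z)$ forces the output $K_{p-1}(x)$ to have parts of size $x = O(\mathrm{polylog}\, n)$, so $|V_i|-(n-2)$ is negative and your bound says nothing. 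Your ``saturated regime'' worry misreads what the lemma outputs. One also cannot salvage this by running \cref{lem:greedy} once more at level $r=p-1$: the dichotomy there would produce a book of only $(1-4k\varepsilon)N/(p-1) < n$ vertices, which is not a contradiction. The step from a small induced $K_{p-1}(t)$ to a $K_{p-1}$ with $\Omega(n)$ common neighbors is genuinely the hard part and your sketch does not bridge it.

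The paper deduces \cref{thm:book-book} as a special case of \cref{thm:multipartite}, and the proof of that theorem takes a different route. Rather than working with a small embedded multipartite subgraph, it establishes a \emph{global} structure theorem for $\Gamma$: \cref{lem:low-deg-vtxs} (which is where the greedy embedding is actually used) shows that all but an $\alpha$-fraction of vertices have high degree, and the Andr\'asfai--Erd\H{o}s--S\'os variant \cref{thm:aes-variant} (proved via the stability-supersaturation result \cref{thm:stability} and the linear-bound removal lemma \cref{thm:removal-above-threshold}) then shows the high-degree subgraph $\Gamma[T]$ is genuinely $(p-1)$-partite with parts $T_i$ of size $\approx n/(p-1)\approx n-1$. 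Only then does an averaging argument work: one chooses a random index $i$ and a random $k$-set $Q\subset T_i$, shows every vertex extends $Q$ (i.e.\ is a non-neighbor in $\Gamma$ of all of $Q$) with probability $\geq 1/(p-1)$, and concludes that some $Q$ spans a $B_{k,n}$ in $\ol\Gamma$. The point is that the parts must have size $\Theta(n)$ for an averaging of this kind to be non-vacuous, and getting there requires the minimum-degree/stability machinery, not just a small greedy embedding.
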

In other words, the Ramsey goodness result $r(K_p, \book kn) = (p-1)(n-1)+1$ remains true even if we replace $K_p$ by the much larger graph $\book {p-1}{\delta n}$ containing it. 
\cref{thm:book-book} thus goes beyond the basic Ramsey goodness framework introduced by Burr and Erd\H os, because it shows that $r(G,H) = (p-1)(n-1)+1$ even in some cases when $G$ is not a fixed graph. For more on \cref{thm:book-book}, as well as on what happens when the two books have roughly comparable numbers of vertices, we refer the reader to \cite{CFW2} (as well as the earlier papers \cite{Conlon, CFW}).

Just as before, the proof of \cite{NR09} uses Szemer\'edi's regularity lemma, and hence the bound on $1/\delta$ in \cref{thm:book-book} is of tower type. In order to demonstrate the flexibility of our proof technique, we prove the following generalization of \cref{thm:book-book}, which again goes beyond the basic Ramsey goodness framework of Burr and Erd\H os, as both $G$ and $H$ are allowed to grow with $n$. It generalizes \cref{thm:book-book} because the book $\book{p-1}{\delta n}$ is a complete $p$-partite graph in which all but one part consist of a single vertex.
\begin{theorem}\label{thm:multipartite}
    For every $k,p,t \geq 2$, there exists $\delta>0$ such that the following holds for all $n\ge 1$. Fix positive integers $1 \leq a_1 \leq \dotsb \leq a_{p-1} \leq t$ and $a_p \leq \delta n$. Let $G$ be the complete $p$-partite graph with parts of sizes $a_1,\dots,a_p$. Then $r(G,\book kn) = (p-1)(n-1)+a_1$ if and only if $a_1 = a_2 = 1$.
    
    Additionally, we may take $\delta\geq 2^{-t^{1000k^2p^2}}$.
\end{theorem}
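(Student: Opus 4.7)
The theorem asserts an equivalence, and I would approach each direction separately.

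For the ``only if'' direction, I would exhibit, when $a_1\geq 2$ or $a_2\geq 2$, a graph $\Gamma$ on $(p-1)(n-1)+a_1$ vertices containing no $G$ and whose complement contains no $\book kn$. In the case $a_1\geq 2$ the simplest construction is to augment Burr's classical extremal graph -- the complete $p$-partite graph $K_{n-1,\ldots,n-1,a_1-1}$ on $(p-1)(n-1)+a_1-1$ vertices -- by one additional isolated vertex. A short chromatic argument shows $G\not\subseteq\Gamma$: every proper $p$-coloring of $G$ has smallest class of size $\geq a_1$, while the augmented $\Gamma$ has parts of sizes $n-1,\ldots,n-1,a_1-1,1$, none of whose ``small'' slots can accommodate a class of size $\geq a_1$. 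The complement is a disjoint union of cliques of size $\leq n-1$ together with a universal vertex, which I would check is $\book kn$-free by a short case analysis. The remaining case $a_1=1,a_2\geq 2$ needs a less symmetric construction of similar flavor, whose verification is case-heavy and which I would sketch rather than write out.

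The ``if'' direction is the main content. Assuming $a_1=a_2=1$, I let $\Gamma$ be a graph on $N=(p-1)(n-1)+1$ vertices whose complement is $\book kn$-free, and aim to show $\Gamma$ contains a copy of $G=K_{1,1,a_3,\ldots,a_p}$. It suffices to locate in $\Gamma$ an induced complete $p$-partite subgraph $K_{m_1,\ldots,m_p}$ with $m_1,\ldots,m_{p-1}\geq t$ and $m_p\geq a_p$, since $G$ then embeds into any such subgraph directly. A conceptual difference from \cref{thm:main1} is that $\Gamma$ need not be $K_p$-free here (only $G$-free), so the target multipartite subgraph has $p$ parts rather than $p-1$.

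To build such a subgraph, I would adapt the greedy embedding strategy introduced for \cref{thm:main1}. That strategy iteratively grows an induced complete multipartite subgraph $M\subseteq\Gamma$: at each step, one selects a vertex $v\in V(\Gamma)\setminus M$ that either starts a new part of $M$ (by being adjacent in $\Gamma$ to every vertex of $M$) or extends an existing part $P$ of $M$ (by being adjacent in $\Gamma$ to all of $M\setminus P$ and non-adjacent to all of $P$). The hypothesis that $\ol\Gamma$ contains no $\book kn$ -- equivalently, every $k$-element independent set of $\Gamma$ has fewer than $n-k$ common non-neighbors in $\Gamma$ -- supplies a large pool of candidate extending vertices at each step. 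I would run the embedding in two phases: first, grow $M$ until it has $p$ parts each of size at least $t$; then grow the $p$-th part to size at least $a_p$.

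The main obstacle is tracking the cumulative loss over the roughly $pt$ greedy steps of the first phase, each of which may shrink the candidate pool by a factor depending on $k$, $p$, and the current state of $M$. The double-exponential bound $\delta\geq 2^{-t^{1000k^2p^2}}$ is calibrated precisely so that after the first phase the candidate pool still contains at least $a_p\leq\delta n$ vertices suitable for the $p$-th part while maintaining the induced structure of $M$. If the greedy process ever halts for lack of an extending vertex, one instead extracts a copy of $\book kn$ in $\ol\Gamma$ from the structural obstruction, contradicting the hypothesis and completing the proof.
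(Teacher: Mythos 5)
Your proposal has gaps in both directions, and the ``if'' direction in particular is headed in a direction that cannot close.

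\textbf{The ``only if'' construction is incorrect.} Adding an isolated vertex $v$ to the Burr-type graph $K_{n-1,\dots,n-1,a_1-1}$ does prevent $G$ from embedding, but it destroys book-freeness of the complement. In $\ol\Gamma$, the isolated vertex $v$ becomes adjacent to everything, so take any $k$ vertices of a size-$(n-1)$ clique of $\ol\Gamma$ as a spine: their common $\ol\Gamma$-neighborhood consists of the remaining $n-1-k$ vertices of that clique \emph{plus} $v$, which is $n-k$ vertices, giving a copy of $\book kn$. The paper's construction instead keeps $N=(p-1)(n-1)+a_1$ vertices in a $(p-1)$-part structure with one enlarged part $U_1$ of size $n+a_1-1$, and places a $C_4$-free, (almost) $a_1$-regular graph inside $U_1$; the $C_4$-freeness blocks $K_{a_1,a_2}$ (hence $G$) while the bounded degrees in $U_1$ ensure that $\ol\Gamma\vert_{U_1}$ has at most one vertex of degree $\ge n-1$, which blocks $\book kn$ since $k\ge 2$. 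This single construction covers both $a_1\geq 2$ and $a_1=1,a_2\geq 2$; your proposal to handle the latter by a separate ``less symmetric construction'' is unnecessary and was left unspecified in any case.

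\textbf{The ``if'' direction plan would stall.} You propose to greedily grow an induced complete multipartite subgraph of $\Gamma$ to $p$ parts and then embed $G$ in it, ``adapting the greedy strategy from Theorem~\ref{thm:main1}.'' But the greedy lemma (Lemma~\ref{lem:greedy}/\ref{lem:find-blowup}) can only reach a $(p-1)$-partite blowup: at the passage from an $r$-part structure to an $(r+1)$-part structure, the alternative it produces is a book on roughly $(1-4k\eps)N/r$ vertices, which must exceed $n$ to yield the contradiction. When $r=p-1$, we have $N/(p-1)=n-1+\tfrac{1}{p-1}<n$, so this quantity falls below $n$ and the lemma yields nothing. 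This is no technical inconvenience but the heart of the matter: the extremal $(p-1)$-partite Tur\'an-type graph shows that one cannot always find a $p$-partite blowup inside a $G$-free $\Gamma$ on $(p-1)(n-1)+1$ vertices, so the goal of ``locating an induced $K_{m_1,\dots,m_p}$'' is not achievable in general. The paper's actual proof of part~(i) proves instead that $\ol\Gamma$ must contain $\book kn$, via a genuinely different strategy: \cref{lem:low-deg-vtxs} (which does use \cref{lem:find-blowup} with $r\le p-1$) shows that all but an $\alpha$-fraction of vertices have degree close to $(1-\tfrac1{p-1})N$; the Andr\'asfai--Erd\H{o}s--S\'os variant \cref{thm:aes-variant} (built on a stability-supersaturation theorem and a linear-bound removal lemma) then forces the high-degree part of $\Gamma$ to be genuinely $(p-1)$-partite; and a final averaging over random $k$-subsets of random parts produces a spine in $\ol\Gamma$ with at least $N/(p-1)>n-1$ extensions, giving the book. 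None of this machinery appears in your sketch, and the ``double-exponential bound calibrated to the cumulative loss over $pt$ greedy steps'' is not how the bound arises -- it comes from the stability and removal ingredients, not from greedy attrition.
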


Although \cref{thm:multipartite} has not appeared in the literature, the ``if'' direction (which is the harder one) can be deduced from the general theorem of Nikiforov and Rousseau \cite[Theorem 2.1]{NR09}. Nonetheless, the main novelty is not the statement of \cref{thm:multipartite}, but rather the fact that our proof again avoids the use of the regularity lemma, so that the bounds on $1/\delta$ are not of tower-type. It would be very interesting to see how far one can push these ideas; for example, is it possible to completely eliminate the use of the regularity lemma from the proof of \cite[Theorem 2.1]{NR09}?

\paragraph{Organization.} In \cref{sec:good-book}, we warm up by proving \cref{thm:main1}; along the way, we prove some general lemmas that set the groundwork for \cref{thm:multipartite}. In \cref{sec:main-proof}, we prove Theorem~\ref{thm:multipartite}. Part (ii) of the theorem is a short explicit construction, but part (i) requires a variant of the Andr\'asfai--Erd\H os--S\'os theorem, \cref{thm:aes-variant}, which we prove in Sections~\ref{sec:stability-supersaturation} and~\ref{sec:aes-variant}. \cref{thm:aes-variant} is an important ingredient in the proof of \cref{thm:multipartite}, as it essentially allows us to reduce to the case that such a $G$-free graph is $(p-1)$-partite. While such a statement is relatively standard, the specific version we need is apparently new. Finally, we collect some interesting open problems in \cref{sec:concluding}.

\paragraph{Notation and terminology.} For positive integers $p,a_1,\dots,a_p$, let $K_p(a_1,\dots,a_p)$ denote the complete $p$-partite graph with parts of sizes $a_1,\dots,a_p$. In case $a_1=\dotsb=a_p=a$, we denote this by $K_p(a)$. We denote the number of edges in a graph $\Gamma$ by $e(\Gamma)$, and the number of edges between vertex subsets $X$ and $Y$ by $e(X,Y)$. We say that a graph $\Gamma$ contains a \emph{copy} of a graph $G$ if $\Gamma$ has a (not necessarily induced) subgraph isomorphic to $G$. If $\Gamma$ has no copy of $G$, we say that $\Gamma$ is \emph{$G$-free}. For positive real numbers $x$ and $y$, we denote by $x \pm y$ any quantity in the interval $[x-y,x+y]$. All logarithms are to base 2 unless otherwise noted.
For the sake of clarity of presentation,
we omit floor and ceiling signs when they are not crucial.

\section{Ramsey goodness of books}\label{sec:good-book}

In this section, we prove \cref{thm:main1} using a greedy embedding strategy, which we first describe informally. Assume for the sake of contradiction that there exists graph $\Gamma$ on $(p-1)(n-1)+1$ vertices which satisfies:
\begin{enumerate}
\item \label{property:kp}$\Gamma$ is $K_p$-free.
\item \label{property:book}$\overline{\Gamma}$ contains no copy of $B_{k,n}$.
\end{enumerate}
We iteratively find smaller and smaller induced complete multipartite subgraphs $H_1, \ldots, H_{p-1}, H_p$ of $\Gamma$ by applying these properties, where $H_i=K_i(t_i)$ is an induced complete $i$-partite subgraph. 

An induced copy of $H_1 = K_1(t_1)$ is just an independent set of size $t_1$,  which exists by applying Property \ref{property:kp} and Ramsey's theorem. To find $H_2$, which is an induced complete bipartite subgraph, we use Property \ref{property:book} to see that a substantial portion of the vertices of $\Gamma$ have many edges to $H_1$, so we can find many vertices $U$ with the same common neighborhood in $H_1$. Within $U$, we find a large independent set $U'$ via Ramsey's theorem and Property \ref{property:kp}, and $U' \cup (N(U')\cap H_1)$ forms the desired $H_2$. Continuing in this fashion, we build each $H_i$ out of the previous $H_{i-1}$ by using Property \ref{property:book} to find many common neighbors to extend to, and Property \ref{property:kp} with Ramsey's theorem to find an independent set among those common neighbors. The existence of the final $H_p = K_p$ in $\Gamma$ provides the desired contradiction.

The following result is the greedy embedding lemma that we use. Given a graph $\Gamma$, it allows us to find a large book in $\ol \Gamma$ or find a large induced complete multipartite subgraph of $\Gamma$.

\begin{lemma}\label{lem:greedy}
Let $k, r, s, t$ be positive integers with $s \leq t$ and $2k \le t$, and let $G$ be any graph.  Let $\Gamma$ be a $G$-free graph with $N \geq \binom ts^r \frac{t}{2ks}r(G, K_s)$ vertices which contains $K_{r}(t)$ as an induced subgraph, with parts $V_1,\dots,V_r$. If $\overline \Gamma$ does not contain a book $\book kn$ with $n \geq (1-4ks/t)N/r$ vertices, then $\Gamma$ contains an induced copy of $K_{r+1}(s)$ with parts $W_0,\dots,W_r$, where $W_i \subseteq V_i$ for every $1 \leq i \leq r$.
\end{lemma}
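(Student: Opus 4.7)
The plan is to carry out the three-step strategy foreshadowed by the outline just above. Set $n^* := (1 - 4ks/t)N/r$ and, for each $i \in [r]$ and $v \in V(\Gamma) \setminus V_i$, write $f_i(v) := |V_i \setminus N_\Gamma(v)|$. The key counting inequality is
\[
    \sum_{v \in V(\Gamma) \setminus V_i} \binom{f_i(v)}{k} \leq \binom{t}{k} n^*, \qquad i \in [r],
\]
which follows by double-counting pairs $(v, S)$ with $S \in \binom{V_i}{k}$ and $v$ non-adjacent to every vertex of $S$ in $\Gamma$: since $V_i$ is independent in $\Gamma$ it is a $t$-clique in $\overline{\Gamma}$, so each such $S$ is a $K_k$ in $\overline{\Gamma}$ and, by the book-avoidance hypothesis, has fewer than $n^*$ common $\overline{\Gamma}$-neighbors (else $S$ together with those neighbors would yield a copy of $\book{k}{n}$ with $n \geq n^*$).

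Define the set of ``bad'' vertices $B_i := \{v \in V(\Gamma) \setminus \bigcup_j V_j : f_i(v) \geq t - s + 1\}$, equivalently $\{v \in V(\Gamma) \setminus \bigcup_j V_j : |N_\Gamma(v) \cap V_i| < s\}$. A Markov-type bound applied to the counting inequality gives $|B_i| \leq (\binom{t}{k}/\binom{t-s+1}{k}) \cdot n^*$. The main obstacle of the proof is then to verify the delicate numerical claim
\[
    |U| := |V(\Gamma) \setminus (\textstyle \bigcup_j V_j \cup \bigcup_i B_i)| \geq \binom{t}{s}^r r(G, K_s),
\]
after which the rest of the lemma is immediate. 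I would prove this by factoring $\binom{t}{k}/\binom{t-s+1}{k} = \prod_{j=0}^{s-2}(1 + k/(t-k-j)) \leq \exp(2k(s-1)/t)$ (valid because $n^* > 0$ forces $t > 4ks$, which in turn forces each denominator to be at least $t/2$), and then combining with the elementary inequality $e^x(1 - 2x) \leq 1 - x$ for $x \geq 0$, applied with $x = 2ks/t$. Together with the quantitative hypothesis $N \geq \binom{t}{s}^r (t/(2ks)) r(G, K_s)$, this yields $\sum_i |B_i| \leq (1 - 2ks/t) N - rt$, and hence $|U| \geq 2ks N/t \geq \binom{t}{s}^r r(G, K_s)$, as desired.

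With $|U|$ controlled, the rest is routine. For each $v \in U$, pick an arbitrary $s$-subset $W_i(v) \subseteq N_\Gamma(v) \cap V_i$ for every $i \in [r]$ (possible because $v \notin \bigcup_i B_i$). The pigeonhole principle on the $\binom{t}{s}^r$ possible tuples $(W_1(v), \ldots, W_r(v))$ yields a subset $U' \subseteq U$ with $|U'| \geq r(G, K_s)$, all sharing a common tuple $(W_1, \ldots, W_r)$. Since $\Gamma[U']$ inherits $G$-freeness from $\Gamma$, the definition of the Ramsey number $r(G, K_s)$ furnishes an independent set $W_0 \subseteq U'$ of size $s$. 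Then $W_0, W_1, \ldots, W_r$ induce the desired $K_{r+1}(s)$ in $\Gamma$: each $W_i$ is independent (immediate for $i \geq 1$ since $W_i \subseteq V_i$, by construction for $W_0$); all edges between $W_i$ and $W_j$ for distinct $i, j \in [r]$ are present since they come from the induced $K_r(t)$; and all edges from $W_0$ to each $W_i$ are present because $W_i \subseteq N_\Gamma(v) \cap V_i$ for every $v \in W_0 \subseteq U'$.
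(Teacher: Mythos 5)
Your proof takes a genuinely different route from the paper's. The paper partitions $V(\Gamma)$ into sets $U_0,U_1,\dots,U_r$ (where $U_i$ for $i\geq 1$ consists of vertices with at most $s$ neighbors in $V_i$, and $U_0$ is the rest) and does a dichotomy: either some $U_i$ with $i\geq 1$ is large, in which case a spine in $V_i$ and the low-degree vertices of $U_i$ produce a large book in $\ol\Gamma$; or else $U_0$ is large, from which the induced $K_{r+1}(s)$ is extracted by pigeonhole and Ramsey. Your proof replaces the dichotomy with a direct double-count that bounds $\ab{B_i}$ for every $i$ simultaneously via the sum $\sum_v \binom{f_i(v)}{k}$. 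This is a clean and arguably more symmetric argument, and the final pigeonhole/Ramsey step is essentially the same in both.

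However, there is a genuine gap in your numerical estimate. Your derivation uses the bound $\sum_{v\notin V_i}\binom{f_i(v)}{k}\leq\binom tk n^*$ and the inequality $e^x(1-2x)\leq 1-x$ with $x=2ks/t$, which together yield
\[
\sum_i\ab{B_i}\leq r\cdot e^{2k(s-1)/t}\cdot n^* \leq \left(1-\frac{2ks}{t}\right)N,
\]
but you assert the stronger conclusion $\sum_i\ab{B_i}\leq(1-2ks/t)N - rt$. The extra $-rt$ term does not follow from what you wrote, and it is essential: without it you only get $\ab U \geq (2ks/t)N - rt$, and the hypothesis on $N$ only guarantees $(2ks/t)N \geq \binom ts^r r(G,K_s)$, not the slightly larger $(2ks/t)N - rt \geq \binom ts^r r(G,K_s)$. (One can check for $r=1$, say, that the slack you do have — namely $e^{2k(s-1)/t}$ versus $e^{2ks/t}$ — is not enough to produce a $-rt$.) The fix is to tighten the double-count: a spine $S\subseteq V_i$ automatically has the $t-k$ vertices of $V_i\setminus S$ as pages (since $V_i$ is independent in $\Gamma$), so the book-avoidance hypothesis in fact gives fewer than $n^*-t$ common $\ol\Gamma$-neighbors of $S$ outside $V_i$, and hence $\sum_{v\notin V_i}\binom{f_i(v)}{k}<\binom tk(n^*-t)$. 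Carrying the $-t$ through, the factor of $r$ produces exactly the needed $-rt$, and the remainder of your argument goes through. So the approach is sound, but as written the key estimate is not justified.
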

\begin{proof}
Let $\eps = s/t$. Partition the vertex set of $\Gamma$ into $r+1$ parts $U_0,U_1,\ldots,U_r$, where, for each $i \in [r]$, every vertex in $U_i$ has degree at most $\eps t$ to $V_i$, and every vertex in $U_0$ has degree at least $\eps t$ to each $V_j$. Note that by construction, $V_i \subseteq U_i$ for $i\in [r]$. 

Suppose there is $i \in [r]$ such that $|U_i| \geq (1-2k\eps)N/r$. Let $X$ denote the set of all vertices $v \in V_i$ with at most $2\eps|U_i \setminus V_i|$ neighbors in $U_i \setminus V_i$. Since each vertex in $U_i$ has density at most $\eps$ to $V_i$, we have $|X| \geq |V_i|/2 = t/2\geq k$. Let $Q$ be any $k$ vertices in $X$. Then all but at most a $2k\eps $ fraction of the vertices in $U_i \setminus V_i$ are empty to $Q$. So $Q$ together with the vertices of $U_i$ that have have no neighbors in $Q$ form a $k$-book in $\overline \Gamma$ with at least 
$$(1-2k\eps)|U_i \setminus V_i|+|V_i| = (1-2k\eps)(\ab{U_i} - \ab{V_i}) +\ab{V_i}\geq (1-2k\eps)\ab{U_i} \geq (1-4k\eps)N/r$$ vertices.

So we may assume that there is no $i \in [r]$ with $|U_i| \geq (1-2k\eps)N/r$. In this case, we have $|U_0| \geq N-r(1-2k\eps)N/r=2k\eps N$. By the pigeonhole principle, there is 
a subset $T \subset U_0$ of size at least ${\binom ts}^{-r}|U_0| \geq r(G, K_s)$ such that there exist subsets $W_i \subseteq V_i$ with $|W_i|=s$ for $i\geq 1$ such that every vertex in $T$ is complete to each $W_i$. As $\Gamma$ and hence the induced subgraph $\Gamma[T]$ is $G$-free and $|T| \geq r(G, K_s)$, we know that $T$ contains an independent set $W_0$ of order $s$. Then $W_0,W_1,\ldots,W_r$ form a complete induced $(r+1)$-partite subgraph of $\Gamma$ with parts of size $s$. 
\end{proof}
Our next lemma shows that, once we find a large induced complete multipartite subgraph of $\Gamma$, we can find a large book in $\ol \Gamma$.

\begin{lemma}\label{lem:simple}
If a $K_p$-free graph $\Gamma$ on $n$ vertices contains $K_{p-1}(k)$ as an induced subgraph, then its vertex set can be partitioned into $p-1$ subsets that each span a $k$-book in $\overline \Gamma$. 
\end{lemma}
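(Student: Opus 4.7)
The plan is to construct the partition explicitly by assigning each vertex of $\Gamma$ to the part whose corresponding spine it will attach to. Let $V_1, \dots, V_{p-1}$ denote the parts (each of size $k$) of the induced $K_{p-1}(k)$ in $\Gamma$. The key observation is that in $\overline{\Gamma}$ each $V_i$ is a clique $K_k$, since $V_i$ is independent in $\Gamma$; these will serve as the spines of the $p-1$ books.

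The main step is to show that every vertex $v \in V(\Gamma)$ admits an index $i(v) \in [p-1]$ such that $v$ has no neighbor in $V_{i(v)}$ in $\Gamma$. For $v \in V_j$ this is automatic with $i(v) = j$, since $V_j$ is independent in $\Gamma$ (and $v$ is not its own neighbor). For $v \notin V_1 \cup \dots \cup V_{p-1}$, I will argue by contradiction: if $v$ had a neighbor $v_i \in V_i$ for every $i \in [p-1]$, then since the $v_i$'s are pairwise adjacent (they lie in distinct parts of the complete multipartite graph), the set $\{v, v_1, \dots, v_{p-1}\}$ would form a copy of $K_p$ in $\Gamma$, contradicting the hypothesis that $\Gamma$ is $K_p$-free. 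So some $V_i$ is completely disjoint from $N_\Gamma(v)$, and I fix any such $i$ as $i(v)$.

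Now define $A_i := \{v \in V(\Gamma) : i(v) = i\}$ for each $i \in [p-1]$; by construction, $A_1, \dots, A_{p-1}$ partition $V(\Gamma)$ and $V_i \subseteq A_i$. It remains to verify that $\overline{\Gamma}[A_i]$ spans a copy of $B_{k,|A_i|}$ with spine $V_i$. Since $V_i$ is independent in $\Gamma$, it forms a $K_k$ in $\overline{\Gamma}$. Every $v \in A_i \setminus V_i$ has no neighbor in $V_i$ in $\Gamma$ by choice of $i(v) = i$, hence $v$ is adjacent in $\overline{\Gamma}$ to all of $V_i$. This is exactly the definition of a $k$-book with spine $V_i$ and $|A_i| - k$ additional pages, completing the proof.

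I do not anticipate any real obstacle here: the entire argument is essentially a single application of $K_p$-freeness to produce a well-defined assignment $v \mapsto i(v)$, followed by unpacking what it means in the complement. The only minor care needed is to handle the vertices $v \in V_1 \cup \dots \cup V_{p-1}$ consistently, which is trivial since independence of $V_j$ in $\Gamma$ already forces $V_j$ to be a valid choice of spine for any $v \in V_j$.
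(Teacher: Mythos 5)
Your proof is correct and takes essentially the same approach as the paper: both partition the vertex set according to which part $V_i$ each vertex has no neighbors in, deriving the existence of such a part from $K_p$-freeness, and then observe that each resulting class together with its spine $V_i$ forms a $k$-book in $\overline{\Gamma}$. The only difference is that you spell out the $K_p$-freeness argument (picking a neighbor from each $V_i$ would give a $K_p$) where the paper simply asserts it.
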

\begin{proof}
Let $V_1,\ldots,V_{p-1}$ be the $p-1$ parts of the induced $K_{p-1}(k)$. As $\Gamma$ is $K_p$-free, each vertex in $\Gamma$ has no neighbors in some $V_i$. Partition the vertex set of $\Gamma$ into $p-1$ parts $U_1,\ldots,U_{p-1}$, where, for each $i \in [p-1]$, each vertex in $U_i$ has no neighbors in $V_i$. Then each $U_i$ spans a $k$-book in $\overline \Gamma$ with spine $V_i$. 
\end{proof}

Our next result is the main form in which we use \cref{lem:greedy}, and follows from it by a simple inductive argument.

\begin{lemma}
\label{lem:find-blowup}
Let $k,p,x,n$ be positive integers, and let $z = x\cdot(20k)^{p}$. Let $G$ be any graph. Let $\Gamma$ be a $G$-free graph on at least $N=(p-1)(n-1)+1$ vertices, and suppose $S \subseteq V(\Gamma)$ satisfies $\ab S \ge z^z \cdot r(G, K_z)$. Then either $\ol \Gamma$ contains a copy of $\book kn$, or else $\Gamma$ contains $K_{p-1}(x)$ as an induced subgraph, one part of which is a subset of $S$.
\end{lemma}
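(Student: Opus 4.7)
The plan is to iteratively apply \cref{lem:greedy} $(p-2)$ times, building a chain of induced complete multipartite subgraphs
\[
V_1 \;\subset\; K_2(t_2) \;\subset\; K_3(t_3) \;\subset\; \cdots \;\subset\; K_{p-1}(t_{p-1})
\]
of $\Gamma$, starting from an independent set $V_1 \subseteq S$ of size $t_1$ and growing by one part per step. Here $t_1 > t_2 > \cdots > t_{p-1} \geq x$ is a carefully chosen sequence, and because the output of \cref{lem:greedy} is a set of parts $W_0, W_1, \ldots, W_r$ with $W_i \subseteq V_i$ for $i \geq 1$, the descendant of $V_1$ remains inside $S$ throughout the iteration. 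Thus the final $K_{p-1}(t_{p-1})$ has one part contained in $S$, and trimming each part down to size $x$ gives the desired $K_{p-1}(x)$. If any application of \cref{lem:greedy} instead produces its book alternative, that book will, by our parameter choice, be a $\book kn$ in $\overline\Gamma$, completing the proof via the other disjunct.

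The base case is handled by Ramsey's theorem applied to $\Gamma[S]$: this induced subgraph is $G$-free, and since $|S| \geq z^z r(G, K_z) \geq r(G, K_{t_1})$ whenever $t_1 \leq z$, an independent set $V_1 \subseteq S$ of size $t_1$ exists. The sequence $t_j$ must then be chosen so that both hypotheses of \cref{lem:greedy} hold at every iteration. The book-size hypothesis $(1-4kt_{j+1}/t_j)|V(\Gamma)|/j \geq n$, together with $|V(\Gamma)| \geq (p-1)(n-1)+1$, forces the ratio $r_j := t_j/t_{j+1}$ to exceed roughly $4k(p-1)/(p-1-j)$. I plan to set $r_j = 5k(p-1)/(p-1-j)$, which provides a small slack while keeping the telescoping product
\[
\prod_{j=1}^{p-2} r_j \;=\; \frac{(5k)^{p-2}(p-1)^{p-2}}{(p-2)!} \;\leq\; (5ke)^{p-1} \;\leq\; (20k)^p
\]
bounded (using $(p-1)^{p-2}/(p-2)! \leq e^{p-1}$, by comparing to the Taylor series of $e^{p-1}$). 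Hence $t_1 \leq x(20k)^p = z$, as required.

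The other hypothesis, $|V(\Gamma)| \geq \binom{t_j}{t_{j+1}}^j \frac{t_j}{2kt_{j+1}} r(G, K_{t_{j+1}})$, is where the $z^z$ factor in the lower bound on $|S|$ is essential. Bounding $\binom{t_j}{t_{j+1}}^j \leq (er_j)^{jt_{j+1}}$ and checking by a short direct calculation (using our explicit formula for $r_j$) that $jt_{j+1}$ is maximized at $j=1$ with value at most $z/(5k)$, the right-hand side is at most $(O(kp))^{z/(5k)} \, r(G, K_z)$, comfortably smaller than $z^z r(G, K_z) \leq |S| \leq |V(\Gamma)|$. The main obstacle I anticipate is the simultaneous verification of both hypotheses across all $p-2$ iterations, especially at the extremal step $j=p-2$ where the book-size hypothesis is tightest; once the parameters are correctly chosen, each application of \cref{lem:greedy} goes through routinely, and the final trimming step (preserving the $S$-part) completes the proof.
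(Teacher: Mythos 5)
Your proof is correct and follows essentially the same approach as the paper's: set up a decreasing sequence $t_1 > t_2 > \cdots > t_{p-1} = x$ governed by ratios roughly $\Theta(k(p-1)/(p-1-j))$, establish the base case via Ramsey's theorem in $\Gamma[S]$, and iterate \cref{lem:greedy} to grow the induced multipartite subgraph while keeping the first part inside $S$. The only cosmetic difference is your ratio $r_j = 5k(p-1)/(p-1-j)$ (building in slack on the book-size hypothesis) versus the paper's exact $4k(p-1)/(p-1-j)$, but the arithmetic verifications — $t_1 \leq z$ via Stirling, and the vertex-count requirement of \cref{lem:greedy} bounded by $z^z \cdot r(G,K_z)$ — are the same in substance.
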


\begin{proof}
For $r=1,\ldots,p-2$, let $\eps_r = \left(1-r/(p-1)\right)/(4k)$ so that $(1-4k\eps_r)/r=1/(p-1)$. Let $t_{p-1}=x$ and $t_r=t_{r+1}/\eps_r$ for $r=p-2,\ldots,1$. 
Observe that 
\[
t_1=t_{p-1}/\prod_{r=1}^{p-2} \eps_r = x(4k)^{p-2}(p-1)^{p-2}/(p-2)!<(20k)^p x = z.
\]
Since $t_1 \geq t_2 \geq \dotsb \geq t_{p-1}$, this implies that $t_r <z$ for all $r$. We now prove by induction on $r$ for $r \in [p-1]$ that $\Gamma$ contains $K_r(t_r)$ as an induced subgraph, with the first part of $K_r(t_r)$ being a subset of $S$. 

For the base case $r=1$, we have $\ab S \geq r(G, K_z) \geq r(G,K_{t_1})$, so $\Gamma$ contains an independent set of order $t_1$, that is, $\Gamma[S]$ contains $K_r(t_r)$ with $r=1$ as an induced subgraph. 

Now suppose $\Gamma$ contains $K_r(t_r)$ as an induced subgraph, with the first part a subset of $S$. We apply Lemma \ref{lem:greedy} with $s=t_{r+1}$ and $t=t_r$.
Observe that 
\begin{eqnarray*}
\binom{t_{r}}{t_{r+1}}^r \left(\frac{t_r} {2kt_{r+1}}\right)r(G, K_{t_{r+1}})  & \leq & \left(\frac e{\eps_r}\right)^{rt_r}\left(\frac{t_r} {2kt_{r+1}}\right)r(G, K_{t_{r+1}}) \\ 
& < & z^z \cdot r(G, K_z) \le |S|.
\end{eqnarray*}
So either $\ol \Gamma$ contains a $k$-book with at least $(1-4k\eps_r)N/r=N/(p-1) > n-1$ vertices, in which case we are done, or $\Gamma$ contains an induced $K_{r+1}(t_{r+1})$ whose first $r$ parts are subsets of the $r$ parts of the $K_r(t_r)$. In particular, the first part of this induced $K_{r+1}(t_{r+1})$ is a subset of $S$. This proves the claimed inductive statement. The desired statement is just then the case $r=p-1$.
\end{proof}
We are now ready to prove \cref{thm:main1}, whose statement we now recall.
\begin{thm:main1}
For all $k,p \geq 2$, if $n \geq 2^{k^{10p}}$, then $\book kn$ is $p$-good; that is, $$r(K_p,\book kn) = (p-1)(n-1)+1.$$
\end{thm:main1}
\begin{proof}
Let $N=(p-1)(n-1)+1$ and $z=k(20k)^p$. Note that $z \leq (20k^2)^p \leq k^{7p}$ since $k \geq 2$, so
\[
    2z\log z \leq 2\cdot k(20k)^p \cdot 7p \log k = (\sqrt{14 k \log k})^2 (20k)^p p \leq (80 k^2)^p p \leq (160k^2)^p,
\]
since $p \geq 2$ and $\sqrt{k \log k} \leq k$ for all $k \geq 2$, and since $p \leq 2^p$. Finally, we observe that $160k^2 \leq k^{10}$ for all $k \geq 2$. The Erd\H os--Szekeres bound \cite{ErSz} implies that $r(K_p,K_z) \leq \binom{z+p}{p} \leq z^z$, and therefore,
\[
    z^z \cdot r(K_p,K_z) \leq z^{2z} = 2^{2z \log z} \leq 2^{k^{10p}} \leq n \leq N.
\]
Suppose for the sake of contradiction that there is a $K_p$-free graph on $N$ vertices such that $\overline \Gamma$ does not contain a $k$-book with $n$ vertices. By \cref{lem:find-blowup}, applied with $S=V(\Gamma)$ and $x = k$, we see that $\Gamma$ must contain $K_{p-1}(k)$ as an induced subgraph. But then Lemma \ref{lem:simple} implies that $\ol \Gamma$ contains a $k$-book with $n$ vertices as a subgraph, completing the proof. 
\end{proof}

\section{Proof of Theorem \ref{thm:multipartite}} \label{sec:main-proof}


In this section, we prove \cref{thm:multipartite}, which we now restate.
\begin{thm:multipartite}
For any $k,p,t \geq 2$, there exists $\delta\geq 2^{-t^{1000k^2p^2}}$ such that the following holds for all $n \geq 1$. Fix positive integers $1 \leq a_1 \leq \dotsb \leq a_{p-1} \leq t$ and $a_p \leq \delta n$. Let $G=K_p(a_1,a_2,\dots,a_p)$ and $H = \book kn$.
\begin{enumerate}[label=(\roman*)]
    \item\label{item:if} If $a_1=a_2=1$, then $r(G,H) = (p-1)(n-1)+a_1$.
    \item\label{item:only-if} If $a_2 \geq 2$, then $r(G,H) > (p-1)(n-1)+a_1$.
\end{enumerate}
\end{thm:multipartite}


We start with the construction for the ``only if'' direction, part (ii) of the theorem.

\begin{proof}[Proof of \cref{thm:multipartite}\ref{item:only-if}.]
Let $\Gamma$ be a graph on $N = (p-1)(n-1)+a_1$ vertices which are divided into $p-1$ parts $U_1,\ldots, U_{p-1}$ with $|U_1| = n+a_1-1$ and $|U_2| = \cdots = |U_{p-1}| = n-1$. The edges of $\Gamma$ are defined as follows. First, all pairs of vertices in two different parts are adjacent. Second, $U_1$ induces a $C_4$-free subgraph $A = \Gamma[U_1]$ which is almost $a_1$-regular. This means that either $A$ is $a_1$-regular (if $|U_1|$ or $a_1$ is even), or else all but one vertices of $A$ have degree $a_1$ and one vertex has degree $a_1-1$ (if $|U_1|$ and $a_1$ are both odd). Such a graph $A$ always exists if $n$ is large enough in terms of $a_1$; for example, a random graph with this degree sequence is $C_4$-free with positive probability for sufficiently large $n$ \cite[Corollaries 1--2]{Wormald}.

It remains to show that $\Gamma$ is $G$-free and $\overline{\Gamma}$ is $H$-free. Suppose $\Gamma$ contains a copy of $G$. Since $G$ is complete $p$-partite and $U_2,\ldots, U_{p-1}$ are independent sets of $\Gamma$, each of these sets can contain only vertices from at most one part of this copy of $G$. Thus, at least two parts of $G$ must be entirely contained inside $U_1$, which means that $\Gamma[U_1]$ must contain a copy of the complete bipartite graph $K_{a_1, a_2}$. By construction, $A = \Gamma[U_1]$ is $C_4$-free, so this is impossible unless $a_1=1$. When $a_1=1$, $A$ has maximum degree $1$ and thus cannot contain a copy of $K_{a_1, a_2}$ since $a_2 \ge 2$. In all cases, $\Gamma$ is $G$-free.

The complement $\overline{\Gamma}$ is a disjoint union of $\overline{A}$ and $p-2$ copies of $K_{n-1}$. The book $H$ is connected and has $n$ vertices, so $K_{n-1}$ cannot contain a copy of $H$. Also, $k\ge 2$, so $H$ contains at least two vertices of degree $n-1$, whereas $\overline{A}$ has either one or zero vertices of degree at least $n-1$. It follows that $\overline{A}$ contains no copies of $H$ either, completing the proof.
\end{proof}

The proof of \cref{thm:multipartite}\ref{item:if} divides into three parts. We first prove a stability-supersaturation result, which says that a graph with few copies of $K_p$ and with high minimum degree is close to $(p-1)$-partite. Using this, we prove the following 
variant of the Andr\'asfai--Erd\H{o}s--S\'os theorem,  \cref{thm:aes-variant} below, which states that a graph with high minimum degree and no copy of $K_p(a_1,\dots,a_p)$ is $(p-1)$-partite.

\begin{theorem}\label{thm:aes-variant}
For every $p,t \geq 2$ and every $1 =a_1 = a_2 \leq a_3 \leq \dotsb \leq a_{p-1} \leq t$, there exist some $\alpha,\delta>0$ such that if $m$ is large enough in terms of $t$ and $p$, $a_p \leq \delta m$, and $\Gamma$ is a $K_p(a_1,a_2,\dots,a_p)$-free graph on $m$ vertices with minimum degree at least $(1-1/(p-1)-\alpha)m$, then $\Gamma$ is $(p-1)$-partite.

Additionally, we may take $\alpha \geq 1/(200p^6 t^2), \delta \geq 2^{-t^{100p}}$, and the result holds for $m \geq tp^{20p}$.
\end{theorem}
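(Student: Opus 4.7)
The plan is a two-stage argument: first use the stability-supersaturation result from \cref{sec:stability-supersaturation} to show that $\Gamma$ is approximately $(p-1)$-partite, and then upgrade this to exact $(p-1)$-partiteness by using the $K_p(a_1,\dots,a_p)$-free hypothesis to rule out edges within parts.

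For the first stage, the $K_p(a_1,\dots,a_p)$-free hypothesis implies that $\Gamma$ has few copies of $K_p$: a multipartite K\H ov\'ari--S\'os--Tur\'an argument shows that more than $\eta_0 m^p$ copies of $K_p$ would allow one to iteratively extract common neighborhoods of sizes $a_1,a_2,\dots,a_p$ to produce $K_p(a_1,\dots,a_p)$. Feeding this bound into the stability-supersaturation lemma yields a partition $V_1,\dots,V_{p-1}$ of $V(\Gamma)$ with at most $\eta m^2$ ``bad edges'' (edges with both endpoints in the same part), for some small $\eta$. The minimum-degree hypothesis then forces $|V_i| = m/(p-1)\pm O((\alpha+\sqrt{\eta})m)$ for each $i$, since otherwise some vertex would have too many non-neighbors.

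For the second stage, I would assume for contradiction that $uv$ is a bad edge in some $V_i$, and aim to build a copy of $K_p(a_1,\dots,a_p)$ in $\Gamma$ with $u,v$ playing the roles of the two singleton parts. By minimum degree, $|N(u)\cap N(v)| \ge (1 - 2/(p-1) - 2\alpha)m$, and essentially all of this common neighborhood lies in $\bigcup_{j\ne i} V_j$, which consists of $p-2$ parts each of size about $m/(p-1)$. Using the small bad-edge count and cross non-edge count, the induced subgraph on $N(u)\cap N(v)\cap\bigcup_{j\ne i}V_j$ is close to a complete $(p-2)$-partite graph with each part of size about $m/(p-1)$. An iterated multipartite K\H ov\'ari--S\'os--Tur\'an argument, restricting $a_p$ vertices at a time to common neighborhoods in the remaining parts, then produces a copy of $K_{p-2}(a_3,\dots,a_p)$ inside this common neighborhood, provided $a_p$ is small enough relative to the part sizes. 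Together with $u$ and $v$, this yields a copy of $K_p(1,1,a_3,\dots,a_p) = K_p(a_1,\dots,a_p)$ in $\Gamma$, contradicting the hypothesis. To handle the possibility that some vertices are ``exceptional'' (having many bad edges), I would either pass to the locally-optimal partition in which each vertex has no more bad edges than edges to any other part, or iteratively move exceptional vertices to the part that minimizes bad-edge count, reducing to the case where every bad edge has two ``clean'' endpoints.

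The main obstacle is ensuring that the common neighborhood $N(u)\cap N(v)$ is not only large in total but also has large intersection with each of the $p-2$ parts $V_j$ for $j\ne i$; the minimum-degree condition only directly gives a sum bound, so one must exploit the stability of the partition and the cleanup of exceptional vertices to get the individual lower bounds needed for the K\H ov\'ari--S\'os--Tur\'an step. The polynomial bound $\alpha \ge 1/(200 p^6 t^2)$ reflects constants absorbed in the stability and degree estimates, while the doubly exponential bound $\delta \ge 2^{-t^{100p}}$ comes from the iterated K\H ov\'ari--S\'os--Tur\'an step, where each of the roughly $p$ rounds of neighborhood narrowing imposes a polynomial constraint of degree $t$ on the part sizes, and these constraints compound over the nested embedding.
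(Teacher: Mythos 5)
Your first stage (few $K_p$'s via K\H ov\'ari--S\'os--Tur\'an, then the stability-supersaturation theorem, then minimum degree to pin down part sizes) matches the paper's proof. Your second stage has the right end goal but a real gap at the step you yourself flag as the main obstacle. Passing to the partition minimizing internal edges only guarantees that each vertex has \emph{at least as many} neighbors in every other part as in its own; it does \emph{not} force vertices to have few own-part neighbors, and ``iteratively moving exceptional vertices'' cannot reduce to that case either. In particular, for a bad edge $uv$ in $V_i$, the minimum-degree condition plus local optimality only gives $\ab{N(u)\cap V_j}\gtrsim \ab{V_j}/2$, so $N(u)\cap N(v)\cap V_j$ could still be essentially empty, and the iterated KST in the common neighborhood stalls.

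The missing idea is the paper's intermediate step, which separately exploits $a_1=1$ (not yet $a_1=a_2=1$). Fixing the partition that minimizes internal edges, suppose some $v\in V_i$ has more than $2p^2\sqrt\eps\ab{V_i}$ own-part neighbors. Local optimality then promotes this to $v$ having $\gtrsim p^2\sqrt\eps\ab{V_j}$ neighbors in \emph{every} part $V_j$, and since cross-pair densities are already $\geq 1-p^2\eps$, the sets $N(v)\cap V_j$ span a dense $(p-1)$-partite structure with $\Omega(m^{p-1})$ copies of $K_{p-1}$. Lemma~\ref{lem:zarankiewicz} then produces a copy of $K_{p-1}(a_2,\dots,a_p)$ inside $N(v)$, and since $a_1=1$, adding $v$ yields a forbidden $K_p(a_1,\dots,a_p)$. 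This rules out vertices with many own-part neighbors \emph{outright}, rather than reducing to that case. Once every vertex has $\le 2p^2\sqrt\eps\ab{V_i}$ own-part neighbors, the minimum-degree bound immediately upgrades to density $\geq 1-1/(2pt)$ to each other part, and only then does the final greedy embedding from a bad edge (using $a_1=a_2=1$) go through. Without this step, your second-stage KST argument has no way to control the common neighborhood part-by-part, so you should add it explicitly.
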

Finally, we prove \cref{lem:low-deg-vtxs}, which states that under the assumptions of \cref{thm:multipartite}, almost all vertices of $\Gamma$ have high degree, meaning that we can apply \cref{thm:aes-variant} to conclude that most of $\Gamma$ is $(p-1)$-partite.

To conclude the proof, we use a careful averaging argument to show that under these assumptions, $\ol \Gamma$ must contain a copy of $H=\book kn$, completing the proof. In the next two subsections, we prove the stability-supersaturation theorem and \cref{thm:aes-variant}. In \cref{sec:proof-multipartite}, we prove \cref{lem:low-deg-vtxs} and complete the proof of \cref{thm:multipartite}\ref{item:if}.

\subsection{A stability-supersaturation theorem}\label{sec:stability-supersaturation}

In this section we prove one of the main ingredients of \cref{thm:aes-variant}, a variant of the Erd\H os--Simonovits stability version of Tur\'an's theorem. Roughly speaking, this result combines two types of well-known variants of Tur\'an's theorem. The first, namely the Erd\H os--Simonovits stability theorem \cite{MR0232703, MR0233735}, says that if $\Gamma$ is a $K_p$-free graph with slightly fewer edges than the Tur\'an graph, then $\Gamma$ can be turned into the Tur\'an graph by changing a small number of edges. The second, often known as a supersaturation result \cite{MR726456}, says that if $\Gamma$ is an $m$-vertex graph with slightly \emph{more} edges than the $K_p$-free Tur\'an graph, then it actually contains many (that is, $\Omega(m^p)$) copies of $K_p$. Contrapositively, this latter result says that if $\Gamma$ has few copies of $K_p$, then it cannot have substantially more edges than the Tur\'an graph. 

The result that we need, a combination of the two mentioned above, is the following. It asserts that if $\Gamma$ has slightly fewer edges than the Tur\'an graph (the stability regime) and has few copies of $K_p$ (the supersaturation regime), then it is close to the Tur\'an graph. 

\begin{theorem}\label{thm:stability}
	For every $\varepsilon>0$ and every integer $p \geq 2$, there exist $\eta,\alpha>0$ such that the following holds for all $m \geq 1$. Suppose $\Gamma$ is a graph on $m$ vertices with minimum degree at least $(1- \frac{1}{p-1}- \alpha)m$ and at most $\eta m^p$ copies of $K_p$. Then $V(\Gamma)$ can be partitioned into $V_1 \sqcup \dotsb \sqcup V_{p-1}$, such that the total number of internal edges in $V_1,\dotsc,V_{p-1}$ is at most $\varepsilon \binom m2$. 

	Moreover, we may take $\alpha= \min \{1/(2p^2),\varepsilon/2\}$ and $\eta = p^{-10p} \varepsilon$.
\end{theorem}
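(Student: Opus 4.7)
The plan is to argue by contradiction. Take a partition $V_1 \sqcup \cdots \sqcup V_{p-1}$ of $V(\Gamma)$ minimizing the internal-edge count $I := \sum_i e(V_i)$ and assume $I > \varepsilon \binom m 2$; I will produce more than $\eta m^p = p^{-10p}\varepsilon m^p$ copies of $K_p$ in $\Gamma$, contradicting the hypothesis on $K_p$-count.

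The first step extracts structural information from min-degree and the max-cut property. Max-cut gives $d_{V_i}(v) \le d(v)/(p-1)$ for each $v \in V_i$, while $\delta(\Gamma) \ge (1 - 1/(p-1) - \alpha)m$ forces every vertex to have at most $(1/(p-1) + \alpha)m$ non-neighbors. Combined with $\alpha \le 1/(2p^2)$, these yield: (i) every common neighborhood satisfies $|N(u) \cap N(v)| \ge (1 - 2/(p-1) - 2\alpha)m$; (ii) each $|V_i|$ is of order $m/p$ (both upper and lower bound following from max-cut applied to a minimum-degree vertex); and (iii) the total number of cross non-edges $\sum_{j < k} \bar e(V_j, V_k)$ is at most $I + \alpha\binom m 2$, via the identity
\[
    \sum_{j<k} \bar e(V_j, V_k) = \Bigl(\binom m 2 - \sum_i \binom{|V_i|}{2}\Bigr) - \bigl(e(\Gamma) - I\bigr)
\]
together with the min-degree lower bound on $e(\Gamma)$.

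The main counting step: for each internal edge $uv \in V_i$, the number of $K_p$'s in $\Gamma$ containing $uv$ equals the number of $K_{p-2}$'s in $\Gamma[N(u) \cap N(v)]$. Setting $W^{(j)}_{uv} := V_j \cap N(u) \cap N(v)$, the number of transversal $K_{p-2}$'s across $\bigsqcup_{j\ne i} W^{(j)}_{uv}$ is, by inclusion-exclusion on bipartite non-edges, at least
\[
    \prod_{j \ne i} |W^{(j)}_{uv}| \;-\; \sum_{\substack{j<k\\ j,k\ne i}} \bar e\bigl(W^{(j)}_{uv}, W^{(k)}_{uv}\bigr) \prod_{\ell \ne i,j,k} |W^{(\ell)}_{uv}|.
\]
Using (i) and (ii), the main product term is of order $(m/p)^{p-2}$ on average over internal edges. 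Summing the main term over all $I > \varepsilon\binom m 2$ internal edges and dividing by the $\binom p 2$-overcount (each $K_p$ has at most $\binom p 2$ internal edges), the main-term contribution alone yields $\Omega(\varepsilon m^p/p^{p+2})$ copies of $K_p$, which comfortably exceeds $\eta m^p$.

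The main obstacle is controlling the subtractive inclusion-exclusion term when summed over all $uv$; a per-edge bound is too weak because the sets $W^{(j)}_{uv}$ may have unusually many bipartite non-edges for particular $uv$. I would resolve this by swapping the order of summation: a cross non-edge $xy$ with $x \in V_j$, $y \in V_k$ appears in $\bar e(W^{(j)}_{uv}, W^{(k)}_{uv})$ precisely when $u,v \in N(x) \cap N(y) \cap V_i$, and therefore is counted by at most $\binom{|V_i|}{2} = O((m/p)^2)$ internal edges $uv$ in $V_i$. Combining this with the global bound (iii) on cross non-edges, the total subtractive contribution summed over all $uv$ is at most $O\bigl((m/p)^{p-2}(I + \alpha\binom m 2)\bigr)$, a small fraction of the main-term sum when $\alpha \le \varepsilon/2$. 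The generous choice $\eta = p^{-10p}\varepsilon$ then absorbs all polynomial-in-$p$ losses from the bookkeeping, and the net lower bound on the $K_p$-count contradicts the hypothesis, completing the proof.
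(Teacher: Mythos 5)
Your approach — taking a min-cut $(p-1)$-partition, assuming it has $>\varepsilon\binom m2$ internal edges, and directly counting $K_p$'s through internal edges — is genuinely different from the paper's. The paper applies a high-minimum-degree version of the $K_p$-removal lemma (\cref{thm:removal-above-threshold}) to pass to a $K_p$-free subgraph $\Gamma'$ with almost as many edges, then invokes F\"uredi's quantitative stability theorem (\cref{thm:furedi}) to get the partition. Your plan is closer in spirit to the Bollob\'as--Nikiforov joints argument, which the authors note in passing can give an alternative proof. A fully worked-out version of your route would be a legitimate alternative proof.

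However, as written there is a genuine gap in the main-term lower bound, and the quantitative claims in (ii) aren't justified. For the main term, you want $\sum_{uv\text{ internal}}\prod_{j\ne i}|W^{(j)}_{uv}|\gtrsim I\cdot(m/p)^{p-2}$. Fact (i) gives $|N(u)\cap N(v)|\ge(1-\frac{2}{p-1}-2\alpha)m$, so $u$ and $v$ together have up to $(\frac{2}{p-1}+2\alpha)m$ non-neighbors — already roughly \emph{twice} the size of a typical part $V_j$. These non-neighbors can in principle be concentrated in a single $V_j$, making $W^{(j)}_{uv}=\emptyset$ and killing the whole product for that edge. Min-degree plus max-cut do \emph{not} give a pointwise bound $|W^{(j)}_{uv}|\gtrsim|V_j|$: max-cut only gives $d_{V_j}(u)\ge d_{V_i}(u)$, which is vacuous when $d_{V_i}(u)$ is small, and for an internal edge $uv$ the endpoints $u,v$ precisely \emph{have} larger-than-typical internal degree, which by the degree budget pushes more of their non-neighbors into the cross parts. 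The hedge ``of order $(m/p)^{p-2}$ \emph{on average} over internal edges'' does not repair this, because a sum of products cannot be lower-bounded by averaging the factors separately; one would need to carefully show that the set of internal edges $uv$ for which some $|W^{(j)}_{uv}|$ is small has total measure $o(I)$, and the natural Markov/double-counting attempt bounds that bad set by a quantity comparable to $I$ itself, not by $o(I)$.

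Secondary issues: the lower bound in (ii), ``$|V_i|=\Theta(m/p)$,'' does not follow from max-cut applied to a min-degree vertex; what you can cheaply show is that \emph{if} $\min_i|V_i|<\varepsilon m/2$ then $I<\varepsilon\binom m2$ already (so assume otherwise), but that only gives a lower bound of order $\varepsilon m$, not $m/p$, and propagating $\varepsilon^{p-2}$-type losses through the bookkeeping is not absorbed by $\eta=p^{-10p}\varepsilon$ (which is linear in $\varepsilon$). Your identity in (iii) and the order-of-summation swap for the subtractive term are both correct and the right idea, but they only matter once the main term is secured. To close the gap you would need a quantitatively sharper input than ``min-degree $+$ max-cut,'' which is essentially what the paper's removal-lemma black box (or Bollob\'as--Nikiforov's joints theorem) supplies.
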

We were informed after the writing of this paper that \cref{thm:stability} can also be deduced from the work of Bollob\'as and Nikiforov \cite[Theorem 9]{BoNi} on joints in graphs, with slightly different quantitative dependencies.

A natural approach to prove \cref{thm:stability} is to first apply the celebrated graph removal lemma (see the survey \cite{MR3156927}). This allows us to pass to a $K_p$-free subgraph $\Gamma'$ of $\Gamma$ which still has very many edges. At this point, we can apply the standard stability theorem to deduce that $\Gamma'$ is nearly $(p-1)$-partite; since we deleted few edges to go from $\Gamma$ to $\Gamma'$, we must also have that $\Gamma$ is nearly $(p-1)$-partite. This proof technique was used to prove \cite[Corollary 3.4]{MR4170438}, which is a very similar result to \cref{thm:stability}. This proof technique actually proves a stronger theorem than \cref{thm:stability}, weakening the minimum degree assumption to an average degree assumption.

However, since the known bounds in the graph removal lemma are very weak, this proof technique would yield a tower-type dependence in the parameters $\varepsilon$ and $\eta$ in the statement of \cref{thm:stability}. Moreover, a super-polynomial dependence on the parameters is unavoidable if one only assumes an average degree condition. Indeed, let $\Gamma$ be the disjoint union of a Tur\'an graph on $(1- \alpha)m$ vertices and a graph $\Gamma_0$ on $\alpha m$ vertices which is extremal for the $K_p$ removal lemma, so that $\Gamma$ has at least $(1- \frac{1}{p-1}- \alpha)\binom m2$ edges. Then the distance of $\Gamma$ from being $(p-1)$-partite is roughly the same as the distance of $\Gamma_0$ from being $K_p$-free, and it is known that the clique removal lemma requires super-polynomial bounds in general \cite{Alon}. Such a construction shows that the clique removal lemma and stability-supersaturation theorems like \cref{thm:stability} are very closely related. 

The $\Gamma$ constructed has high average degree but low minimum degree, and this distinction turns out to be crucial. Indeed, in \cite{2105.09194}, the first and third authors proved that the $K_p$ removal lemma has \emph{linear} bounds if the minimum degree of $\Gamma$ is above a certain threshold, namely $(1-\frac{2}{2p-3})m$. This allows us to prove \cref{thm:stability} using the technique outlined above, while obtaining much stronger quantitative control. 

The first tool we need to prove \cref{thm:stability} is the high-degree removal lemma with linear bounds mentioned above, from \cite[Theorem 2.1]{2105.09194}. We remark that the explicit $p$-dependence of the constant is not given in \cite[Theorem 2.1]{2105.09194}, but it is easy to verify that the proof yields the following result. For completeness, we include this proof in \cref{sec:appendix}.
\begin{theorem}\label{thm:removal-above-threshold}
	Let $\Gamma$ be an $m$-vertex graph with with minimum degree at least $(1- \frac{2}{2p-3}+\beta)m$ and with at most $(10p)^{-2p} \beta \lambda m^p$ copies of $K_p$. Then $\Gamma$ can be made $K_p$-free by deleting at most $\lambda m^2$ edges.
\end{theorem}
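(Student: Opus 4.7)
The plan is to identify and delete ``heavy'' edges, where an edge $e\in E(\Gamma)$ is \emph{heavy} if it lies in at least $\kappa\beta m^{p-2}$ copies of $K_p$ in $\Gamma$, for a constant $\kappa = \kappa(p)$ of order $(10p)^{-2p}/\binom{p}{2}$. Since each copy of $K_p$ uses $\binom{p}{2}$ edges, a standard double-counting argument combined with the hypothesis that $\Gamma$ has at most $(10p)^{-2p}\beta\lambda m^p$ copies of $K_p$ shows that the number of heavy edges is at most $\lambda m^2$. The theorem therefore reduces to the following \emph{key claim}: every copy of $K_p$ in $\Gamma$ contains at least one heavy edge. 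Indeed, if this is true, then deleting all heavy edges produces a $K_p$-free graph, as required.

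The driving intuition for the key claim is that the min-degree threshold $1 - 2/(2p-3)$ is the fixed point of the operation ``pass to the common neighborhood of an edge lying in a $K_p$.'' Quantitatively, if $v_1 v_2$ is any edge of some $K_p$-copy $\{v_1,\dots,v_p\}$ in $\Gamma$, then $|N(v_1)\cap N(v_2)|\geq \frac{2p-7}{2p-3}m + 2\beta m$ and the induced subgraph $\Gamma[N(v_1)\cap N(v_2)]$ has min degree at least $(1 - 2/(2p-7))|N(v_1)\cap N(v_2)|$, with strict extra slack coming from $\beta$. This exactly matches the threshold condition required at level $p-2$, and the vertices $v_3,\dots,v_p$ form a $K_{p-2}$-copy inside this common neighborhood. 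The natural strategy is then a strong induction on $p$ that reduces level $p$ to level $p-2$ via this common-neighborhood passage. The base case $p=3$ is handled by a short inclusion-exclusion argument: for any triangle $v_1v_2v_3$, the min-degree hypothesis yields $\sum_{i<j}|N(v_i)\cap N(v_j)|\geq 3\beta m$, so some pair has common neighborhood of size at least $\beta m$, meaning the corresponding edge is contained in at least $\beta m$ triangles.

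The main obstacle I anticipate is the quantitative balance in the recursion. A naive inductive step from level $p-2$ only yields a heavy edge in the common-neighborhood subgraph contained in $\Omega(m^{p-4})$ copies of $K_{p-2}$ there, which translates to $\Omega(m^{p-4})$ copies of $K_p$ in $\Gamma$ through $v_1,v_2$, falling short of the $\Omega(m^{p-2})$ required by the key claim. Overcoming this will likely require either strengthening the inductive hypothesis to a global-supersaturation-style count that controls the total number of cliques (not just the max-heaviness of a single edge) while exploiting the extra slack $\beta$ in the min-degree condition, or else combining the common-neighborhood reductions obtained from several distinct edges of the given $K_p$ simultaneously, in the same spirit as the $p=3$ inclusion-exclusion but iterated through the induction. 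Separately, the low-$p$ base cases (in particular $p=4$, where $2p-7 = 1$ and the recursion degenerates) may need ad hoc direct arguments, and the accumulated constants across $O(p)$ levels of recursion must be tracked carefully to ensure that the final $\kappa$ retains its $(10p)^{-2p}$ order of magnitude.
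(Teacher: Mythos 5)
Your outer reduction is correct and identical to the paper's: call an edge heavy if it lies in at least $\beta(m/(8p))^{p-2}$ copies of $K_p$, double-count to bound the number of heavy edges by $\lambda m^2$, and reduce to the key claim (Lemma~\ref{lem:removal-lemma} in the paper) that every copy of $K_p$ contains a heavy edge. The gap is in your proposed proof of the key claim. As you yourself acknowledge, strong induction on $p$ via ``pass to the common neighborhood of one edge and recurse at level $p-2$'' loses a factor of $m^2$ per step, producing only $\Omega(m^{p-4})$ copies of $K_p$ through the chosen edge rather than the $\Omega(m^{p-2})$ required, and this loss compounds over $\Theta(p)$ levels. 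Your two suggested workarounds are hopes rather than arguments, and the $p=4$ degeneracy is flagged but unresolved. So the key claim is not actually established.

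The missing idea is to iterate differently. Fixing the $K_p$ with vertices $v_1,\ldots,v_p$ and writing $V_i=N(v_i)$, the paper proves by induction on $t$, for $0\le t\le p-3$, that there is a subset $S_t\subseteq[p]$ with $|S_t|=p-t$ such that $V_{S_t}=\bigcap_{i\in S_t}V_i$ contains at least $(m/(8p))^t$ copies of $K_t$. Each step augments a $K_t$ inside $V_{S_t}$ to a $K_{t+1}$ while dropping only \emph{one} index from $S_t$: a bipartite degree count between $S_t$ and the common neighborhood of the $K_t$, together with Markov's inequality, shows that at least an $s/5$ fraction of those common neighbors lie in all but at most one of the sets $V_i$ with $i\in S_t$, and averaging over the dropped index yields a good choice of $S_{t+1}$. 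This gains a factor of roughly $m/(8p)$ per step rather than losing $m^2$. At the final stage $t=p-3$ one retains the extra slack $\beta$ from the minimum-degree hypothesis to conclude that some pair $\{v_i,v_j\}$ lies in at least $\beta(m/(8p))^{p-2}$ copies of $K_p$. Your intuition that $1-2/(2p-3)$ is the fixed point of common-neighborhood passage is exactly what makes the bookkeeping close; the point you miss is that the induction should decrement $|S|$ by one per step and track total clique counts, rather than jump from $p$ to $p-2$ and track the heaviness of a single edge.
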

\noindent We also use the following quantitative form of the stability theorem, due to F\"uredi \cite{MR3383250}.
\begin{theorem}\label{thm:furedi}
	Let $\Gamma$ be an $m$-vertex $K_p$-free graph with at least $(1- \frac{1}{p-1})\frac{m^2}{2}-\ell$ edges. Then $\Gamma$ can be made $(p-1)$-partite by deleting at most $\ell$ edges.
\end{theorem}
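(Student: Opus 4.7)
My plan is to prove Theorem~\ref{thm:furedi} by induction on $p$, with the immediate base case $p = 2$: a $K_2$-free graph is edgeless and hence $1$-partite. For the inductive step ($p \geq 3$), let $\Gamma$ be a $K_p$-free graph on $m$ vertices with $e(\Gamma) \geq \bigl(1 - \tfrac{1}{p-1}\bigr)\tfrac{m^2}{2} - \ell$, and pick a vertex $v$ of maximum degree $d = d(v)$. Since $\Gamma$ is $K_p$-free, the induced subgraph $\Gamma[N(v)]$ on $d$ vertices is $K_{p-1}$-free, so the inductive hypothesis yields a $(p-2)$-partition $N(v) = V_1 \sqcup \dotsb \sqcup V_{p-2}$ with at most $\ell' := \bigl(1 - \tfrac{1}{p-2}\bigr)\tfrac{d^2}{2} - e(\Gamma[N(v)])$ internal edges.

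I would then extend this to a $(p-1)$-partition of all of $V(\Gamma)$ by introducing a new part $V_{p-1}$ for the remaining vertices and locally optimizing: iteratively move any vertex to the part in which it has the fewest neighbors until no single move strictly decreases the internal-edge count. In the resulting locally optimal partition, every vertex $u$ has at most $d(u)/(p-1)$ neighbors in its own part (otherwise moving $u$ would strictly improve the count). A direct edge-counting argument then shows that the total internal edges are at most $\bigl(1 - \tfrac{1}{p-1}\bigr)\tfrac{m^2}{2} - e(\Gamma) \leq \ell$. The key computation rests on the Tur\'an-type identity
\[
    \bigl(1 - \tfrac{1}{p-2}\bigr)\tfrac{d^2}{2} + d(m-d) \leq \bigl(1 - \tfrac{1}{p-1}\bigr)\tfrac{m^2}{2},
\]
which is sharp precisely when $d = (p-2)m/(p-1)$, matching the degree of a typical vertex in the extremal Tur\'an graph $T_{p-1}(m)$.

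The main obstacle is the bookkeeping for edges inside $V \setminus N(v)$, which a priori can be as many as $\binom{m-d}{2}$. A naive placement of $V \setminus N(v)$ into a single new part incurs a factor-of-two loss in the slack and fails to close the induction, while local optimization alone only gives $\sum_i e(V_i) \leq e(\Gamma)/(p-1)$, insufficient when $e(\Gamma)$ is near the Tur\'an number. The fix is to combine the two strategies: start from the inductively constructed partition of $N(v)$, extend to all of $V(\Gamma)$, and then locally optimize. The sharpness of the Tur\'an identity at $d = (p-2)m/(p-1)$ is exactly what lets the induction close without accumulating slack across recursive levels, and the $K_p$-freeness assumption enters crucially through the inductive step applied to the $K_{p-1}$-free graph $\Gamma[N(v)]$.
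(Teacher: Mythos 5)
The paper itself does not prove this statement; it is quoted as a black box from F\"uredi \cite{MR3383250}, whose published argument is a short direct one (a greedy degree-majorized clique $x_1,\dots,x_{p-1}$ together with the partition of $V(\Gamma)$ by each vertex's first non-neighbor among the $x_i$, much in the spirit of \cref{lem:simple}). Your proposal instead adapts the Erd\H{o}s degree-majorization induction for Tur\'an's theorem to the stability setting, and it is correct. To make the crux explicit: writing $\bar N = V(\Gamma)\setminus N(v)$ for the new part, the internal edge count is at most $\ell' + e(\Gamma[\bar N])$ with $\ell' = (1-\tfrac{1}{p-2})\tfrac{d^2}{2} - e(\Gamma[N(v)])$ (which is nonnegative by Tur\'an, so the inductive hypothesis applies), and since
\[
\Bigl(1-\tfrac{1}{p-1}\Bigr)\tfrac{m^2}{2} - e(\Gamma) \;=\; \Bigl(1-\tfrac{1}{p-1}\Bigr)\tfrac{m^2}{2} - e(\Gamma[N(v)]) - e(N(v),\bar N) - e(\Gamma[\bar N]),
\]
the desired bound reduces to $(1-\tfrac{1}{p-2})\tfrac{d^2}{2} + e(N(v),\bar N) + 2e(\Gamma[\bar N]) \le (1-\tfrac{1}{p-1})\tfrac{m^2}{2}$. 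Here $e(N(v),\bar N) + 2e(\Gamma[\bar N]) = \sum_{u\in\bar N}\deg(u) \le (m-d)d$ by the maximality of $d$, and your stated Tur\'an-type inequality (maximized at $d=(p-2)m/(p-1)$) finishes the induction.

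One correction to your narrative: placing $V\setminus N(v)$ into a single new part does \emph{not} incur a factor-of-two loss, and the induction closes without any local optimization. The factor $2$ in front of $e(\Gamma[\bar N])$ above is exactly absorbed by the double-counting in $\sum_{u\in\bar N}\deg(u)$; the lossy step would be bounding $e(\Gamma[\bar N])$ by $\binom{m-d}{2}$, which the degree-sum argument avoids. The local-optimization pass is harmless (it can only decrease the internal edge count, so the bound from the initial partition survives), but it contributes nothing: as you note yourself, the local-optimality property alone yields only $\sum_i e(V_i)\le e(\Gamma)/(p-1)$, which is far too weak, so the ``combination'' is really just the naive count in disguise. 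Relative to F\"uredi's proof, yours trades a one-shot counting argument for a clean classical induction at the cost of a little bookkeeping (checking $\ell'\ge 0$ and the degenerate step $p=3$, where $N(v)$ is an independent set).
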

\noindent With these preliminaries, we can now prove \cref{thm:stability}.
\begin{proof}[Proof of \cref{thm:stability}]
Note that the result is vacuously true if $m<5$, as this implies that $\eta m^p<1$. So we henceforth assume that $m\geq 5$.
	Since $\alpha \leq 1/(2p^2)$, we see that
	\[
		1- \frac{1}{p-1} - \alpha \geq 1 - \frac{1}{p-1} -\frac{1}{2p^2} = 1- \frac{2}{2p-3} + \frac{5p-3}{4p^4-10p^3 + 6p^2} \geq 1 - \frac{2}{2p-3} + \frac{1}{p^3}.
	\]
	Therefore, we may apply \cref{thm:removal-above-threshold} with $\beta = 1/p^3$. We also set $\lambda = \varepsilon/10$, and note that the number of copies of $K_p$ in $\Gamma$ is at most 
	\[
		\eta  m^p = p^{-10p} \varepsilon m^p \leq (10p)^{-2p} \cdot \frac{1}{p^3} \cdot \frac \varepsilon {10} \cdot m^p = (10p)^{-2p} \beta \lambda m^p.
	\]
This implies that we may delete at most $\frac \varepsilon {10} m^2$ edges from $\Gamma$ to obtain a $K_p$-free graph $\Gamma'$. Since $\Gamma$ has minimum degree at least $(1- \frac{1}{p-1}- \alpha)n$, we see that $\Gamma'$ has at least $(1- \frac{1}{p-1} - \alpha)\frac{m^2}{2}- \frac \varepsilon {10}m^2$ edges. Therefore, by \cref{thm:furedi}, we see that $\Gamma'$ can be made $(p-1)$-partite by deleting at most $(\frac \alpha 2 + \frac\varepsilon {10})m^2$ edges. Let $\Gamma''$ be this $(p-1)$-partite subgraph, and let $V_1 \sqcup \dotsb \sqcup V_{p-1}$ be its $(p-1)$-partition. Since each $V_i$ is an independent set in $\Gamma''$, we see that the total number of edges of $\Gamma$ contained in $V_1,\dots,V_{p-1}$ is at most
	\[
		\frac \varepsilon {10}m^2 + \left(\frac \alpha 2 + \frac \varepsilon{10}\right)	m^2 \leq \left(\frac \varepsilon 5 + \frac \varepsilon 4\right)m^2 \leq \varepsilon \binom m2
	\]
	by our choice of $\alpha \leq \varepsilon/2$ and $m \geq 5$.
\end{proof}

\subsection{A blowup variant of the Andr\'asfai--Erd\H os--S\'os theorem}\label{sec:aes-variant}
The Andr\'asfai--Erd\H os--S\'os theorem \cite{AES} is a minimum-degree stability version of Tur\'an's theorem. It says that if an $m$-vertex $K_p$-free graph has minimum degree greater than $\frac{3p-7}{3p-4}m$, then it is $(p-1)$-partite; moreover, the constant $\frac{3p-7}{3p-4}$ is best possible. What we need is \cref{thm:aes-variant} instead, which we prove in this section. It says that if a graph has high minimum degree and does not contain some blowup of $K_p$, then it is $(p-1)$-partite. We remark that unlike Andr\'asfai, Erd\H os, and S\'os, we do not obtain the exact minimum degree threshold for being $(p-1)$-partite; for more on such refined questions, see e.g.\ \cite{Illingworth}.
 We need the following lemma, which is essentially due to Erd\H os \cite{Erdos64}.
\begin{lemma}\label{lem:zarankiewicz}
For every $0<\eta\leq \frac 12$ and $p,t \geq 2$, and $1\leq a_1 \leq \dotsb \leq a_{p-1} \leq t$, there exists some $\delta>0$ such that the following holds for large enough $m$. If $a_p \leq \delta m$ and $\Gamma$ is a $K_p(a_1,a_2,\dots,a_p)$-free graph on $m$ vertices, then $\Gamma$ has at most $\eta m^p$ copies of $K_p$.

Additionally, we may take $\delta \geq \eta^{t^{10p}}$, and the result holds for $m \geq 2t/\eta$.
\end{lemma}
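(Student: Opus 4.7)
The plan is to induct on $p$. The base case $p = 2$ is a direct application of the Kővári–Sós–Turán theorem: if $\Gamma$ is $K_{a_1,a_2}$-free on $m$ vertices with $a_1 \leq t$ and $a_2 \leq \delta m$, then
\[
e(\Gamma) \leq \tfrac{1}{2}\bigl((a_2-1)^{1/a_1}(m-a_1+1)m^{1-1/a_1} + (a_1-1)m\bigr) \leq \tfrac{1}{2}\delta^{1/t}m^2 + \tfrac{t}{2}m,
\]
which is at most $\eta m^2$ provided $\delta \leq (\eta/2)^t$ and $m \geq 2t/\eta$. Both conditions are comfortably implied by the claimed quantitative bounds.

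For the inductive step $p \geq 3$, I would suppose for contradiction that $\Gamma$ is $K_p(a_1,\ldots,a_p)$-free on $m$ vertices but contains more than $\eta m^p$ copies of $K_p$, and explicitly construct a forbidden copy. The construction is a sequential dependent-random-choice argument, selecting the parts $A_1, A_2, \ldots, A_p$ one at a time in this order, so that the potentially very large part $A_p$ (of size up to $\delta m$) is chosen \emph{last}, after the smaller parts $A_1,\ldots,A_{p-1}$ (each of size at most $t$). Throughout, I maintain the common neighborhood $W_j = N_\Gamma(A_1 \cup \cdots \cup A_j)$ and the invariant $\#K_{p-j}(\Gamma[W_j]) \geq \sigma_j |W_j|^{p-j}$, starting from $W_0 = V(\Gamma)$ and $\sigma_0 = \eta$.

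The step from $j$ to $j+1$ is as follows: pick a random $a_{j+1}$-tuple from $W_j^{a_{j+1}}$ (with replacement; one can arrange for the tuple to be a genuine set) to form $A_{j+1}$, and set $W_{j+1} = N(A_{j+1}) \cap W_j$. For each $K_{p-j-1}$-clique $T \subseteq W_j$, the probability $T \subseteq W_{j+1}$ is $(|N(T)\cap W_j|/|W_j|)^{a_{j+1}}$. Summing over $T$ and applying Jensen's inequality (convexity of $x \mapsto x^{a_{j+1}}$), combined with the double counting $\sum_T |N(T)\cap W_j| = (p-j)\#K_{p-j}(\Gamma[W_j])$, yields
\[
\mathbb{E}\bigl[\#K_{p-j-1}(\Gamma[W_{j+1}])\bigr] \geq ((p-j)\sigma_j)^{a_{j+1}}|W_j|^{p-j-1}.
\]
Fixing $A_{j+1}$ to realize this expectation and using the trivial bound $\#K_{p-j-1}(\Gamma[W_{j+1}]) \leq |W_{j+1}|^{p-j-1}$ gives both $\sigma_{j+1} \geq \sigma_j^{a_{j+1}} \geq \sigma_j^t$ and $|W_{j+1}|/|W_j| \geq \sigma_j^{a_{j+1}/(p-j-1)} \geq \sigma_j^t$, where the last inequality uses $\sigma_j \leq 1$ and $a_{j+1}/(p-j-1) \leq t$. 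Compounding across $j=0,1,\ldots,p-2$ gives $\sigma_j \geq \eta^{t^j}$ and $|W_{p-1}| \geq \eta^{t+t^2+\cdots+t^{p-1}}m \geq \eta^{t^p}m$. Since $\delta \leq \eta^{t^p}$ (with plenty of slack relative to $\delta \geq \eta^{t^{10p}}$), we have $|W_{p-1}| \geq \delta m \geq a_p$, so I can pick $A_p \subseteq W_{p-1}$ of size $a_p$ to finish the forbidden copy.

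The main obstacle is carefully managing the quantitative compounding of density losses across the $p-1$ iterated random-choice steps; each step contributes a factor of $t$ to the exponent of $\eta$, and after $p-1$ rounds the exponent compounds to roughly $t^p$, matching the claimed bound $\delta \geq \eta^{t^{10p}}$ with substantial slack. The decision to select the large part $A_p$ \emph{last} is essential: selecting it first would induce a single density jump $\sigma \mapsto \sigma^{a_p}$ with $a_p$ potentially as large as $\delta m$, producing a loss exponential in $m$ that would prevent any such iterative argument from closing.
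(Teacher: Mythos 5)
Your proof is correct and takes essentially the same approach as the paper: both peel off the small parts $A_1,\dots,A_{p-1}$ one at a time via a convexity/Jensen argument (the paper's $\sum_S\binom{|\ext(S)|}{a_1}$ count is the deterministic form of your expectation bound for a random $a_1$-tuple), crucially leaving the large part $a_p$ to be extracted last from the final common neighborhood. The paper packages the iteration as a formal induction on $p$ (with the $p=2$ base case proved by the same cherry-count underlying K\H{o}v\'ari--S\'os--Tur\'an), whereas you unroll it explicitly while tracking $\sigma_j$ and $|W_j|$; this is a presentational rather than a substantive difference.
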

\begin{proof}
We proceed by induction on $p$. The base case $p=2$ just says that a $K_{a_1,\delta m}$-free graph has at most $\eta m^2$ edges. So suppose that $\Gamma$ is an $m$-vertex $K_{a_1,\delta m}$-free graph with more than $\eta m^2$ edges. We double-count the number of copies of $K_{1,a_1}$ in $\Gamma$. On the one hand, every $a_1$-set has at most $\delta m$ common neighbors, so there are at most $\delta m \binom m{a_1} < \delta m^{a_1+1}$ copies of $K_{1,a_1}$. On the other hand, a vertex of degree $d$ contributes $\binom d{a_1}$ many copies. Therefore,
\[
\delta m^{a_1+1} > \sum_{v \in V(G)} \binom{\deg(v)}{a_1} \geq m \binom{2 e(\Gamma)/m}{a_1} \geq m \left(\frac{2e(\Gamma)}{a_1 m}\right)^{a_1}
\]
where the second inequality uses Jensen's inequality, which we may apply since $2e(\Gamma)/m \geq a_1$ by our assumption that $m\geq 2t/\eta$ is sufficiently large. Rearranging, we find that $e(\Gamma) < a_1\delta^{1/a_1} m^2/2$. If we let $\delta = (\eta/a_1)^{a_1}\geq \eta^{t^{20}}$, this gives the desired result.

We now proceed with the inductive step, so suppose that $\Gamma$ is an $m$-vertex $K_p(a_1,\dots,a_p)$-free graph with more than $\eta m^p$ copies of $K_p$. For every $(p-1)$-set of vertices $S$, let $\ext(S)$ denote the set of vertices $v$ such that $S \cup \{v\}$ is a $K_p$. Note that the sum of $\ab{\ext(S)}$ over all $(p-1)$-sets $S$ is exactly $p$ times the number of copies of $K_p$ in $\Gamma$. By assumption, this sum is therefore more than $p \eta m^p$. Thus, the average value of $\ab{\ext(S)}$ is greater than $p \eta m^p/\binom m{p-1}> \eta m$. Again by Jensen's inequality,
\[
\sum_{S \in \binom{V(\Gamma)}{p-1}} \binom{\ab{\ext(S)}}{a_1} > \binom{m}{p-1} \binom{\eta m}{a_1},
\]
and we may apply Jensen's inequality since $\eta m \geq a_1$ by our assumption that $m \geq 2t/\eta$ is sufficiently large.
Therefore, there is some $a_1$-set $A$ such that the common neighborhood of $A$ has at least 
\[
\binom{m}{p-1} \binom{\eta m}{a_1} / \binom m{a_1} \geq \eta' m^{p-1}
\]
copies of $K_{p-1}$, for some $\eta' \geq (\eta/2)^t/p^p$. We have that $(\eta')^{t^{10(p-1)}} \geq \eta^{t^{10p}}$, so by induction, the common neighborhood of $A$ must have a copy of $K_{p-1}(a_2,\dots,a_p)$, which is a contradiction.
\end{proof}

We can now prove \cref{thm:aes-variant}.
\begin{proof}[Proof of \cref{thm:aes-variant}]
Let $\eps=1/(100p^6 t^2)$, and let $\alpha,\eta$ be the parameters given in \cref{thm:stability}. Recall that $\alpha \leq \eps$. Finally, let $\delta=2^{-t^{100p}} < \eta^{t^{10p}}$. By \cref{lem:zarankiewicz}, we see that since $\Gamma$ is a $K_p(a_1,a_2,\dots,a_p)$-free graph on $m$ vertices, it must have at most $\eta m^p$ copies of $K_p$, and it has minimum degree at least $(1-1/(p-1)- \alpha)m$ by assumption. Therefore, \cref{thm:stability} implies that $\Gamma$ has a partition into parts $V_1,\dots,V_{p-1}$ such that the total number of internal edges is at most $\varepsilon \binom m2$. We fix such a partition with the minimum number of total internal edges. In particular, every vertex must have at least as many neighbors in every other part as it does in its own part.

Since $\Gamma$ has minimum degree at least $(1-1/(p-1)-\alpha)m$, it must have at least $(1-1/(p-1)-\alpha)\frac{m^2}{2}$ edges. Therefore, since there are at most $\eps \frac{m^2}{2}$ internal edges in $V_1,\dots,V_{p-1}$, we must have that
\begin{equation}\label{eq:edges-across-lb}
		\sum_{1 \leq i<j\leq p-1} e(V_i,V_j) \geq \left( 1- \frac{1}{p-1} -\alpha -\varepsilon \right) \frac{m^2} 2 \geq \left( 1- \frac{1}{p-1} - 2 \varepsilon \right) \frac{m^2}{2}
	\end{equation}
since $\alpha \leq \eps$.
We note that
\begin{equation}\label{eq:sum-of-squares}
	\sum_{i=1}^{p-1} \left( \ab{V_i}- \frac{m}{p-1} \right) ^2= \sum_{i=1}^{p-1} \ab{V_i}^2 - \frac{2m}{p-1} \sum_{i=1}^{p-1} \ab{V_i} + \frac{m^2}{p-1} = \sum_{i=1}^{p-1} \ab{V_i}^2 - \frac{m^2}{p-1}.
\end{equation}
Since the left-hand side of (\ref{eq:sum-of-squares}) is non-negative, we see that
\[
	\sum_{1 \leq i<j\leq p-1} \ab{V_i} \ab{V_j} = \frac12\left( m^2 - \sum_{i=1}^{p-1} \ab{V_i}^2 \right)\leq \frac 12\left( m^2 - \frac{m^2}{p-1} \right) = \left( 1- \frac{1}{p-1} \right) \frac{m^2}2.
\]
We can conclude from this that each $V_i$ has cardinality $\frac{m}{p-1} \pm  \sqrt{2\varepsilon}m$. For if not, then the left-hand side of (\ref{eq:sum-of-squares}) would be larger than $2 \varepsilon m^2$, and the above computation would contradict (\ref{eq:edges-across-lb}).

Now, suppose that for some $1\leq a < b\leq p-1$, we have that $e(V_a,V_b) <(1- p^2\varepsilon)\ab{V_a}\ab{V_b}$. Then we would find that
\[
	\sum_{1 \leq i<j\leq p-1} e(V_i,V_j) < \sum_{1 \leq i<j \leq p-1} \ab{V_i}\ab{V_j} -p^2 \varepsilon \ab{V_a} \ab{V_b} \leq \left( 1- \frac{1}{p-1}-2 \varepsilon \right) \frac{m^2}{2},
\]
contradicting (\ref{eq:edges-across-lb}), using the bound $\ab{V_a} \geq \frac{m}{p-1}-\sqrt{2 \varepsilon}m \geq m/p$ since $\eps \leq 1/(2p^4)$. Therefore, we find that for all $i \neq j$,
\begin{equation}\label{eq:pairs-dense}
	e(V_i,V_j) \geq (1- p^2 \varepsilon) \ab{V_i} \ab{V_j}.
\end{equation}
Now suppose that some vertex $v \in V_i$ has more than $2p^2\sqrt \eps \ab{V_i}$ neighbors in its own part $V_i$. By our assumption above, this means that $v$ also has more than $2p^2 \sqrt \eps \ab{V_i}\geq p^2 \sqrt \eps \ab{V_j}$ neighbors in each part $V_j$ for $j \neq i$, where we used the fact that $\ab{V_j} = \frac{m}{p-1} \pm \sqrt{2\eps}m$ and the fact that $\eps\leq 1/(2p^4)$ to conclude that $\ab{V_i} \geq \frac 12 \ab{V_j}$. Let $U_j = N(v) \cap V_j$ denote the neighbors of $v$ in $V_j$. For every $1 \leq a \neq b \leq p-1$, we have by (\ref{eq:pairs-dense}) that
\[
    e(U_a,U_b) \geq \ab{U_a} \ab{U_b} - p^2 \eps \ab{V_a} \ab {V_b} \geq \left(1 - \frac{p^2 \eps}{p^4 \eps}\right) \ab{U_a} \ab{U_b} =\left(1 - \frac 1{p^2}\right) \ab{U_a} \ab{U_b},
\]
where the second inequality uses our assumption that $\ab{U_a} \geq p^2 \sqrt \eps \ab{V_a}$, and similarly for $U_b$. By the union bound, if we pick a random vertex from $U_a$ for each $1 \leq a \leq p-1$, then they span a copy of $K_{p-1}$ with probability at least $1-\binom p2/p^2\geq \frac12$. Therefore, the neighborhood of $v$ contains at least
\[
\frac 12 \prod_{a=1}^{p-1} \ab{U_a} \geq \frac{(p^2 \sqrt \eps)^{p-1}}{2}\prod_{a=1}^{p-1} \ab{V_a} \geq \frac{(p^2\sqrt \eps)^{p-1}}{2p^{p-1}} m^{p-1} = \eta' m^{p-1}
\]
copies of $K_{p-1}$, where $\eta' \geq (20p^2 t)^{-p}$ by our choice of $\eps$.
By \cref{lem:zarankiewicz}, and by our choice of $\delta$, the neighborhood of $v$ contains a copy of $K_{p-1}(a_2,\dots,a_p)$. Since $a_1=1$, this implies that $\Gamma$ contains a copy of $K_p(a_1,a_2,\dots,a_p)$, which is a contradiction. Thus, we conclude that every vertex $v \in V_i$ has at most $2p^2 \sqrt \eps \ab{V_i}$ neighbors in its own part $V_i$, for every $1 \leq i \leq p-1$. 

We now claim that for every $1 \leq i \neq j \leq p-1$, every vertex $v \in V_i$ has at least $(1-1/(2pt))\ab{V_j}$ neighbors in $V_j$. Indeed, if not, then $v$ has at least $\ab{V_j}/(2pt)$ non-neighbors in $V_j$, and at least  $(1-2p^2 \sqrt \eps)\ab{V_i}-1$ non-neighbors in $V_i$. In total, the number of non-neighbors of $v$ is at least
\[
\frac{1}{2pt} \ab{V_j} + (1-2p^2 \sqrt \eps)\ab{V_i}-1 > \left(1 + \frac 1{2pt} - 2p^2 \sqrt \eps - 4\sqrt \eps\right)\frac m{p-1} > \left(\frac 1{p-1} + \eps\right)m,
\]
where the final inequality uses our choice of $\eps$. This contradicts the assumption that the minimum degree of $\Gamma$ is at least $(1-1/(p-1)-\eps)m$.

Now suppose that there is some edge $vw$ inside some part $V_i$, and assume without loss of generality that $i=1$. The vertices $v$ and $w$ have at least $(1-1/(pt))\ab{V_2}>a_3$ common neighbors in $V_2$, so we may pick some set of $a_3$ common neighbors in $V_2$. Then $v,w,$ and these $a_3$ common neighbors have at least $(1-(a_3+2)/(2pt))\ab{V_3}>(1-2t/(2pt))\ab{V_3}>a_4$ common neighbors in $V_3$, so we may pick $a_4$ such common neighbors in $V_3$. Continuing in this way, we can greedily pick $a_{j+1}$ vertices from $V_j$ which are common neighbors of the previously chosen vertices, for each $j \leq p-2$. Having done this, we have picked at most $pt$ vertices, so they still have at least $(1-pt/(2pt))\ab{V_{p-1}} = \frac 12 \ab{V_{p-1}}> \delta m$ common neighbors in $V_{p-1}$. Thus, we have built a copy of $K_p(a_1,\dots,a_p)$ in $\Gamma$, a contradiction. This shows that there can be no edge inside any $V_i$, and thus that $\Gamma$ is $(p-1)$-partite.
\end{proof}

\subsection{Proof of Theorem \ref{thm:multipartite}\ref{item:if}}\label{sec:proof-multipartite}
We begin by establishing an upper bound on the Ramsey number of a complete multipartite graph vs.\ a fixed clique. This result can be viewed as a weak version of Ramsey goodness, as it implies that a certain off-diagonal Ramsey number grows linearly in the number of vertices of the first graph. We need such a bound in order to apply \cref{lem:find-blowup}, which we will do shortly in the proof of \cref{lem:low-deg-vtxs}.
\begin{lemma}\label{lem:off-diag-bound}
    For all $\gamma>0$ and all integers $k,p,t,z\geq 2$, there exists $\delta>0$ such that for all $1 \leq a_1 \leq \dotsb \leq a_{p-1} \leq t$, all $n \geq 2t (e\cdot r(K_p,K_z))^p/\gamma$, and all $a_p \leq \delta n$, we have
    \[
    r(K_p(a_1,\dots,a_p), K_z) \leq \gamma n.
    \]
    Moreover, we may take $\delta \geq \gamma (e\cdot r(K_p,K_z))^{-t^{20p}}$.
\end{lemma}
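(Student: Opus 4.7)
The plan is to combine Ramsey's theorem with the (contrapositive of the) supersaturation-type \cref{lem:zarankiewicz}. Write $R := r(K_p, K_z)$ and $N := \gamma n$, and fix any graph $\Gamma$ on $N$ vertices containing no independent set of size $z$; the goal is to find a copy of $K_p(a_1,\dots,a_p)$ inside $\Gamma$.

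First I would show that $\Gamma$ has many copies of $K_p$ by a standard supersaturation argument. Since $\Gamma$ has no independent set of size $z$, Ramsey's theorem implies that every $R$-element subset of $V(\Gamma)$ spans at least one copy of $K_p$. Double-counting pairs consisting of a copy of $K_p$ together with an $R$-subset containing it, and using that each $K_p$ lies in exactly $\binom{N-p}{R-p}$ such $R$-subsets, the number of copies of $K_p$ in $\Gamma$ is at least
\[
    \binom{N}{p}\Big/\binom{R}{p} \;=\; \prod_{i=0}^{p-1}\frac{N-i}{R-i} \;\geq\; \left(\frac{N}{R}\right)^{p},
\]
since each factor is at least $N/R$ when $N \geq R$. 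Setting $\eta := (eR)^{-p}$, this count strictly exceeds $\eta N^p$.

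Next I would apply the contrapositive of \cref{lem:zarankiewicz}: an $N$-vertex graph with more than $\eta N^p$ copies of $K_p$ must contain $K_p(a_1,\dots,a_p)$, provided $a_p \leq \delta_Z N$ for some $\delta_Z \geq \eta^{t^{10p}}$, and provided $N \geq 2t/\eta$. With our choice $\eta = (eR)^{-p}$, the hypothesis $n \geq 2t(eR)^p/\gamma$ is exactly $N \geq 2t/\eta$, and we may take $\delta_Z := (eR)^{-pt^{10p}}$. Setting the final $\delta := \gamma(eR)^{-t^{20p}}$, the elementary inequality $pt^{10p} \leq t^{20p}$ (valid because $p \leq t^{10p}$ for $t,p \geq 2$) gives $\delta \leq \gamma \delta_Z$, so that $a_p \leq \delta n$ implies $a_p \leq \delta_Z N$, as required. \cref{lem:zarankiewicz} then produces the desired blow-up $K_p(a_1,\dots,a_p) \subseteq \Gamma$.

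There is no genuine obstacle beyond bookkeeping of constants: the two workhorses (Ramsey's theorem for the supersaturation step, and \cref{lem:zarankiewicz} for the blow-up step) are both already established, and the only care needed is in choosing $\eta$ small enough so that both the supersaturation bound and the size hypothesis of \cref{lem:zarankiewicz} hold simultaneously, while still keeping $\eta^{t^{10p}}$ large enough to absorb the factor of $\gamma$ in the final $\delta$ bound.
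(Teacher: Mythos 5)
Your proposal is correct and follows essentially the same approach as the paper: use Ramsey's theorem to show every $R$-subset spans a $K_p$, deduce at least $\eta N^p$ copies of $K_p$ by the standard double-count, and then invoke the contrapositive of \cref{lem:zarankiewicz} to extract the blow-up. Your bound $\binom{N}{p}/\binom{R}{p} = \prod_{i<p}\frac{N-i}{R-i} \geq (N/R)^p$ (valid since $N \geq R$, which follows from $n \geq 2t(eR)^p/\gamma$) is a slightly cleaner route to the same $\eta = (eR)^{-p}$, and your final $\delta$ differs from the paper's only by absorbing the factor $p$ in the exponent the same way.
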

\begin{proof}
    Let $m=\gamma n$ and $r=r(K_p,K_z)$. Let $\eta = (er)^{-p}$ and $\delta=\gamma \eta^{t^{10p}}\geq \gamma (er)^{-t^{20p}}$. Note that $m \geq 2t/\eta$ and that if $a_p \leq \delta n$, then $a_p \leq \eta^{t^{10p}} m$. Let $\Gamma$ be an $m$-vertex graph with no independent set of order $z$; we wish to prove that $\Gamma$ contains a copy of $K_p(a_1,\dots,a_p)$. We use an averaging argument originally due to Erd\H os \cite{MR151956}.
    
    Since $\Gamma$ has no independent set of order $z$, every set of $r$ vertices must contain a copy of $K_p$, by the definition of the Ramsey number $r=r(K_p,K_z)$. Moreover, every copy of $K_p$ appears in precisely $\binom{m-p}{r-p}$ many $r$-sets, so the number of copies of $K_p$ in $\Gamma$ is at least
    \[
    \frac{\binom mr}{\binom{m-p}{r-p}} = \frac{\binom mp}{\binom rp} \geq \frac{1}{(er)^p} \cdot m^p = \eta m^p.
    \]
    By (the contrapositive of) \cref{lem:zarankiewicz}, this implies that $\Gamma$ contains a copy of $K_p(a_1,\dots,a_p)$, where we may apply \cref{lem:zarankiewicz} because $a_p \leq \eta^{t^{10p}} m$ and $m \geq 2t/\eta$. This concludes the proof.
\end{proof}

We are now ready to begin the proof of \cref{thm:multipartite}\ref{item:if}. Recall that $G=K_p(a_1,\dots,a_p)$ and $H=\book kn$, and we are interested in studying a graph $\Gamma$ on $N=(p-1)(n-1)+1$ vertices such that $\Gamma$ is $G$-free and $\ol \Gamma$ is $H$-free.
We begin by proving that most vertices of $\Gamma$ have high degree.

\begin{lemma}\label{lem:low-deg-vtxs}
For all integers $k,p,t\geq 2$ and all $\alpha\in (0,\frac 12]$, there exists some $\delta>0$ such that the following holds for all $1 =a_1=a_2 \leq a_3 \leq \dotsb \leq a_{p-1} \leq t$, all sufficiently large $n$, and all $a_p \leq \delta n$. Let $G=K_p(a_1,\dots,a_p)$ and $H=\book kn$.
If $\Gamma$ is a graph on $N = (p-1)(n-1)+1$ vertices such that $\Gamma$ is $G$-free and $\overline{\Gamma}$ is $H$-free, then at most $\alpha N$ vertices of $\Gamma$ have degree at most $d = (1-1/(p-1) - \alpha) N$.

Additionally, we may take $\delta \geq 2^{-(t/\alpha)^{100kp}}$, and the result holds for all $n \geq 2^{(t/\alpha)^{100kp}}$.
\end{lemma}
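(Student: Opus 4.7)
The plan is to argue by contradiction: suppose more than $\alpha N$ vertices of $\Gamma$ have degree at most $d$, forming a set $L$. We will construct a copy of $\book kn$ in $\ol \Gamma$, contradicting the hypothesis. The main tool is \cref{lem:find-blowup}, applied with $S = L$ and a parameter $x$ to be chosen (polynomially in $t/\alpha$). Setting $z = x(20k)^p$ and invoking \cref{lem:off-diag-bound} to bound $r(G, K_z) \leq \gamma n$ for $\gamma$ small enough that $z^z \gamma n \leq \alpha N \leq |L|$, the hypothesis of \cref{lem:find-blowup} is met; we choose $\delta$ small enough (doubly-exponentially so) to validate the use of \cref{lem:off-diag-bound}. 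The conclusion of \cref{lem:find-blowup} is either a copy of $\book kn$ in $\ol \Gamma$ (immediate contradiction) or an induced $K_{p-1}(x)$ in $\Gamma$ with first part $V_1 \subseteq L$.

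In the latter case, we exploit both structural conditions simultaneously. The low-degree condition $V_1 \subseteq L$ implies that each $v \in V_1$ has at most $d - (p-2)x$ neighbors in $A := V(\Gamma) \setminus (V_1 \cup \dots \cup V_{p-1})$, since $v$ is adjacent to all $(p-2)x$ vertices of $V_2 \cup \dots \cup V_{p-1}$; this caps the edges between $V_1$ and $A$. Separately, the $G$-freeness of $\Gamma$ forces the common $\Gamma$-neighborhood of any embedded $K_{p-1}(1,1,a_3,\dots,a_{p-1})$ (one vertex each from $V_1, V_2$, and $a_i$ vertices from $V_i$ for $i \geq 3$) to have independence number less than $a_p$, which in turn limits the number of ``bad'' vertices in $A$ that are densely connected to $V_2 \cup \dots \cup V_{p-1}$.

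The main body of the proof is then a double-counting argument on $k$-subsets of $V_1$ and their common $\Gamma$-non-neighbors. The $\book kn$-freeness of $\ol \Gamma$ yields the upper bound
\[
\sum_{u \notin V_1} \binom{x - d_1(u)}{k} \leq \binom{x}{k}(n - 1 - x),
\]
where $d_1(u) = |N_\Gamma(u) \cap V_1|$. On the other hand, the low-degree condition $V_1 \subseteq L$ forces $\sum_u d_1(u) \leq x d$, and combining this with the $G$-freeness constraint on common neighborhoods (to bound the number of $u \in A$ with $d_1(u)$ close to $x$) produces a lower bound on the same sum. For $x$ polynomial in $t/\alpha$, the lower bound exceeds the upper bound, yielding the contradiction; unpacking it gives a $k$-subset of $V_1$ with at least $n-k$ common non-neighbors, i.e.\ the desired $\book kn$ in $\ol \Gamma$. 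The hardest step will be this double-counting: naive Jensen-type bounds alone yield the result only when $\alpha$ is bounded away from zero by a constant depending on $p$ and $k$, so the essential input is to use $G$-freeness more sharply inside common neighborhoods (beyond just controlling $\Gamma$'s global independence number), which is what forces $x$ to grow polynomially in $t/\alpha$ and produces the claimed doubly-exponential bound on $\delta$.
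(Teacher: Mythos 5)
Your opening two paragraphs match the paper's proof closely: take $S$ to be the set of low-degree vertices, note $|S| \geq \alpha N$, apply \cref{lem:off-diag-bound} to bound $r(G,K_z)$, and then apply \cref{lem:find-blowup} to $S$ to get either $\book kn$ in $\ol\Gamma$ (done) or an induced $K_{p-1}(x)$ with first part $V_1 \subseteq S$. So far so good. The gap is in the ``main body.''

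Your double count on $k$-subsets of $V_1$ alone cannot close the argument, and the reason is structural, not quantitative. The lower bound on $\sum_{u\notin V_1} \binom{x-d_1(u)}{k}$ is a convexity bound, and convexity is \emph{tight} exactly when the degrees $d_1(u)$ concentrate near the mean; given only the budget $\sum_u d_1(u) \leq xd$, Jensen is the best lower bound available, and as you correctly observe it falls short for small $\alpha$. To beat it you need to know that many vertices have $d_1(u)$ well below the mean, equivalently that not too many vertices are dense to $V_1$. Your fix is to invoke $G$-freeness, but $G$-freeness only constrains vertices that are simultaneously dense to \emph{all} of $V_1,\dots,V_{p-1}$ (a vertex dense to the first $p-1$ parts completes a $K_p(1,1,a_3,\dots,a_{p-1})$ to a copy of $G$). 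It says nothing about a vertex that is dense to $V_1$ but sparse to some $V_j$ with $j\geq 2$. Such a vertex has $y_u = x - d_1(u) \approx 0$, contributes nothing to your sum, and can exist in huge numbers — up to roughly $n$ vertices can be sparse to each fixed $V_j$ — so your lower bound does not improve and the contradiction does not materialize. (A smaller issue: the common neighborhood of an embedded $K_{p-1}(1,1,a_3,\dots,a_{p-1})$ need only have fewer than $a_p$ \emph{vertices}, not independence number less than $a_p$, to ensure $G$-freeness.)

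The missing idea, which the paper's proof uses, is to use $\book kn$-freeness of $\ol\Gamma$ with spines in \emph{every} part $V_i$, not just $V_1$. Concretely, the paper partitions $V(\Gamma)$ into $U_0, U_1, \dots, U_{p-1}$ where for $i\geq 1$ the set $U_i$ consists of vertices with few neighbors in $V_i$ (so the vertices you cannot control live in some $U_i$ with $i\geq 2$), and $U_0$ is the set of vertices dense to all parts. Then $|U_0|$ is controlled by $G$-freeness as you anticipate, each $|U_i|$ for $i\geq 1$ is bounded above by roughly $n$ via $\book kn$-freeness with spine a $k$-subset of $V_i$, and the degree condition on $V_1 \subseteq S$ is used once more to tighten the bound on $|U_1|$ specifically by a term of order $(\alpha/10)^k N$. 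Summing the bounds on $|U_0|,\dots,|U_{p-1}|$ then drops below $N$, giving the contradiction. Without the bounds on $|U_2|,\dots,|U_{p-1}|$ — which your single double count never touches — the argument does not close.
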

\begin{proof}

Let $\eps = (\alpha/10)^k/(10k)$ be small in terms of $k$ and $\alpha$, and let $x = t/\eps$ and $z = x(20k)^p$ be large in terms of $k,t$, and $\alpha$. 
We now let $\gamma = \alpha z^{-z}$ and $$\delta = \min\{\gamma (e\cdot r(K_p,K_z))^{-t^{20p}}, (\alpha/10)^k\cdot (2\eps/e)^{tp}\}$$ 
be small in terms of $k,t,p$, and $\alpha$. Finally, we let $n_0 = 2t (e\cdot r(K_p,K_z))^p/\gamma$. It is straightforward to verify that $\delta \geq 2^{-(t/\alpha)^{100kp}}$ and $n_0\leq 2^{(t/\alpha)^{100kp}}$. We henceforth assume that $n \geq n_0$, and let $\Gamma$ be a graph on $N=(p-1)(n-1)+1$ vertices satisfying the assumptions of the lemma. Note that by our choices of $n_0$ and $\delta$, we are in a position to apply \cref{lem:off-diag-bound}, which implies that $r(G,K_z)<\gamma n = \alpha n/z^z$.

Let $S \subset V(\Gamma)$ be the set of vertices of degree
less than $d$. We proceed by contradiction and assume that $|S| \ge \alpha N$. This implies that $\ab S \geq z^z r(G,K_z)$, so we may apply \cref{lem:find-blowup}. We find that there is an induced copy of $K_{p-1}(x)$ in $\Gamma$ whose parts are $V_1,\ldots, V_{p-1}$ with $V_1 \subseteq S$. Thus, all the vertices of $V_1$ have degree less than $d$. 

Partition the vertices of $\Gamma$ into $p$ parts $U_0,\ldots, U_{p-1}$, where for $i\ge 1$,  each vertex in $U_i$ has at most $\eps x$ neighbors in $V_i$, and $U_0$ consists of all vertices with more than $\eps x$ neighbors in each $V_i$. 

First, if $|U_0|\ge (e/\eps)^{tp} \cdot \delta n$, then we can find a copy of $G$ in $\Gamma$ as follows. If we pick a set $W$ by taking $a_i$ vertices uniformly at random from $V_i$ for $i=1,\ldots,p-1$, then the expected number of vertices of $U_0$ complete to $W$ is at least
\[
\prod_{i=1}^{p-1} \frac{\binom{\eps x}{a_i}}{\binom{x}{a_i}} \cdot |U_0| \ge (\eps/e)^{tp}\cdot |U_0| \ge \delta n \geq a_p,
\]
where the first inequality holds since $\eps x = t \geq a_i$ for all $1 \leq i \leq p-1$.
Thus, there exists a $W \subseteq V_i$ for which we can find $a_p$ vertices of $U_0$ which together with $W$ form a copy of $G= K_p (a_1, \ldots, a_p)$. This is a contradiction.

Next, suppose $|U_i| \ge (1-2k \eps)^{-1} (n-k)$ for some $i\ge 1$. Every vertex in $U_i$ has at most $\eps x$ neighbors in $V_i$, so we may remove half the vertices of $V_i$ (the ones with highest degree to $U_i$) to find a subset $V'_i$ such that every $v\in V'_i$ has at most $2\eps |U_i|$ neighbors in $U_i$. Note that $\ab{V_i'} \geq x/2 \geq k$ by our choice of $x$. Take $W$ to be any $k$-subset of $V'_i$, and let $U'_i \subseteq U_i$ be the set of vertices with no neighbors in $W$. We have $|U'_i| \ge (1-2 k\eps) |U_i|\ge n-k$, and so $W$ and $U'_i$ form a copy of $\book kn$ in $\ol \Gamma$, which is again impossible. We henceforth assume that $|U_i| < (1-2k \eps)^{-1}(n-k)$ for all $i\geq 1$.

Finally, suppose $|U_1| \ge (1-2k\eps)^{-1}(n-k) - (\alpha /10)^k N.$
We seek to find a copy of $\book kn$ in $\ol \Gamma$ again, this time using the degree condition on $V_1$. As before, we may pass to a subset $V'_1$ of half the vertices of $V_1$ such that each has at least $(1-2\eps) |U_1|$ non-neighbors in $U_1$. Each vertex of $V'_1$ has degree at most $d = (1-1/(p-1)-\alpha)N$, and so has at least $N-1-d = N/(p-1)+\alpha N -1 \geq n + \alpha N-2$ non-neighbors in total. In particular, since $|U_1| < (1-2k \eps)^{-1}(n-k) < (1+3k\eps) n-2$ and $\alpha \ge 6k \eps$, each vertex of $V'_1$ has at least $(n + \alpha N -2) - |U_1| \ge \alpha N/2$ non-neighbors in $\ol{U_1}$.

Pick a random $k$-subset $W$ of $V'_1$ to form the spine of the book. The number of common non-neighbors the vertices of $W$ have inside $U_1$ is at least $(1-2k\eps)|U_1|$. We now count the expected number of common non-neighbors the vertices of $W$ have in $\ol{U_1}$. For the convenience of the following calculation, we insert phantom vertices to $\ol{U_1}$, each complete to $W$, until $|\ol{U_1}| = N$; this has no effect on the common non-neighborhood we care about. If $u \in \ol{U_1}$ has $y$ non-neighbors in $V'_1$, then the probability that $W$ is chosen entirely among these $y$ vertices is $\binom{y}{k}/\binom{x/2}{k}$.
Since vertices in $V'_1$ have at most $(1-\alpha/2)N$ neighbors in $\ol{U_1}$, the average value of $y$ over a random $u\in \ol{U_1}$ is at least $\alpha/2 \cdot |V'_1| = \alpha x/4$. By linearity of expectation and convexity we find that the expected number of common non-neighbors of $W$ in $\ol{U_1}$ is at least 
\[
\binom{\alpha x / 4}{k}\binom{x/2}{k}^{-1} |\ol{U_1}| \ge \left(\frac{\alpha x}{4k}\right)^k\left(\frac{2k}{ex}\right)^k |\ol{U_1}| \ge \left(\frac\alpha  {2e}\right)^k \cdot \frac N 2 \ge \left(\frac \alpha  {10}\right)^k N.
\] Thus, there exists some particular $W$ with at least $(1-2\eps)|U_1| + (\alpha/10)^k N \ge n-k$ non-neighbors, forming the desired $\book kn$ in $\ol \Gamma$. This contradicts our assumptions on $\Gamma$.

We conclude that the partition $V(\Gamma) = U_0\sqcup \cdots \sqcup U_{p-1}$ satisfies
\begin{align*}
    |U_0|& < (e/\eps)^{tp} \cdot \delta n, \\
    |U_1| & < \frac{n-k}{1-2k\eps} - (\alpha /10)^k N, \\
    |U_i| & < \frac{n-k}{1-2k \eps} \qquad\qquad\textnormal{ if $i \ge 2$.}
\end{align*}
Adding these together, we obtain that the number $N$ of vertices in $\Gamma$ is
\begin{align*}
    N=\sum_{i=0}^{p-1} |U_i| & < \left(\frac e \eps\right)^{tp} \cdot \delta n + (p-1) \frac{n-k}{1-2k \eps} - \left(\frac \alpha {10}\right)^k N \\
    & < (1+ 3k\eps) N + \left(\frac e \eps\right)^{tp} \cdot \delta n - \left(\frac \alpha {10}\right)^k N \\
    &< \left( 1 + \frac 12 \left(\frac \alpha{10}\right)^k\right) N + \left(\frac e \eps\right)^{tp} \cdot \left(\frac \alpha{10}\right)^k \cdot \left(\frac{2 \eps}e\right)^{tp} N - \left(\frac \alpha{10}\right)^k N\\
    &< N.
\end{align*}
This is a contradiction and we are done.
\end{proof}

We now have all the tools to complete the proof of \cref{thm:multipartite}\ref{item:if}.

\begin{proof}[Proof of \cref{thm:multipartite}\ref{item:if}.]
Let $\alpha=1/(200kt^2p^6)$ and $\delta = 2^{-(t/\alpha)^{100kp}}\geq 2^{-t^{1000k^2p^2}}$ be the parameter from \cref{lem:low-deg-vtxs}. The result we wish to prove is vacuously true if $n<1/\delta$, so we assume henceforth that $n \geq 1/\delta= 2^{(t/\alpha)^{100kp}}$.
Recall that $H=\book kn$, and $G = K_p(a_1,\dots,a_p)$, where $1 \leq a_1 \leq \dotsb \leq a_{p-1} \leq t$ and $a_p \leq \delta n$. We are given a $G$-free graph $\Gamma$ on $N = (p-1)(n-1)+1$ vertices, and we wish to show that $\ol \Gamma$ contains a copy of $H$. By \cref{lem:low-deg-vtxs}, we have that at most $\alpha N$ vertices of $\Gamma$ have degree at most $d = (1-1/(p-1)- \alpha)N$. If we let $T$ be the set of vertices of degree greater than $d$, then the induced subgraph $\Gamma[T]$ has at least $(1-\alpha)N$ vertices and thus minimum degree at least $(1-1/(p-1)-2 \alpha)\ab T$. We now apply \cref{thm:aes-variant} to the graph $\Gamma[T]$, which we may do since $2\alpha \leq 1/(200p^6 t^2)$, $\delta \leq 2^{-t^{100p}}$, and $(1-\alpha)N \geq N/2 \geq tp^{20p}$.
Doing so, we find that $\Gamma[T]$ is $(p-1)$-partite. Let the parts of $\Gamma[T]$ be $T_1,\dots,T_{p-1}$. We now argue roughly as in the proof of \cref{thm:aes-variant}.

Recall that for a vertex $v$ and a vertex set $W$, we denote by $d(v,W)$ the \textit{density} of $v$ to $W$, namely the number of neighbors of $v$ in $W$ divided by $\ab{W}$.
\begin{claim}\label{cl:partition-properties}
Let $T_1,\dots,T_{p-1}$ be as defined above. Let $\xi = 4p^2 \alpha$. Then for every $1 \leq i \neq j \leq p-1$, we have that
\begin{equation}\label{eq:part-sizes}
    \left(\frac{1}{p-1} -\xi\right)N \leq \ab{T_i} \leq \left(\frac{1}{p-1} +\xi\right)N
\end{equation}
and
\begin{equation}\label{eq:pair-densities}
    d(w,T_j) \geq 1-\xi \text{ for every }w \in T_i.
\end{equation}
\end{claim}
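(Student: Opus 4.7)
The plan is to derive both bounds directly from two elementary pieces of information: first, every vertex $w \in T$ has at most $(1/(p-1)+\alpha)N$ non-neighbors in $V(\Gamma)$ by the minimum-degree condition defining $T$; second, because $\Gamma[T]$ is $(p-1)$-partite with parts $T_1,\dots,T_{p-1}$, every neighbor of $w \in T_i$ that lies in $T$ must lie in $T \setminus T_i$. Together with $\ab T \geq (1-\alpha)N$, these facts pin down both the sizes of the parts and the cross-part densities.

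I would begin by proving the upper bound in (\ref{eq:part-sizes}). Fix any $w \in T_i$. Since $w$ has at least $(1-1/(p-1)-\alpha)N$ neighbors in $V(\Gamma)$ and at most $\alpha N$ of them can lie in $V(\Gamma) \setminus T$, the number of neighbors of $w$ in $T \setminus T_i$ is at least $(1-1/(p-1)-2\alpha)N$. Consequently $\ab{T \setminus T_i} \geq (1-1/(p-1)-2\alpha)N$, which gives $\ab{T_i} \leq (1/(p-1)+2\alpha)N$. The matching lower bound then falls out by subtraction: $\ab{T_i} = \ab{T} - \sum_{j \neq i} \ab{T_j} \geq (1-\alpha)N - (p-2)(1/(p-1)+2\alpha)N = (1/(p-1)-(2p-3)\alpha)N$. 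Since $\xi = 4p^2\alpha$ comfortably dominates both $2\alpha$ and $(2p-3)\alpha$, this yields (\ref{eq:part-sizes}).

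For the density bound (\ref{eq:pair-densities}), fix $w \in T_i$ and $j \neq i$. Using the upper bound on the part sizes just proved, $\ab{T \setminus T_i} \leq (p-2)(1/(p-1)+2\alpha)N$, which together with the lower bound $(1-1/(p-1)-2\alpha)N$ on the number of neighbors of $w$ in $T \setminus T_i$ gives at most $2(p-1)\alpha N$ non-neighbors of $w$ in $T \setminus T_i$. In particular, $w$ has at most $2(p-1)\alpha N$ non-neighbors inside $T_j$. Dividing by the lower bound $\ab{T_j} \geq (1/(p-1)-(2p-3)\alpha)N \geq N/(2(p-1))$, which is valid since $\alpha \leq 1/(200p^6t^2)$, yields $d(w,T_j) \geq 1 - 4(p-1)^2\alpha \geq 1-\xi$, as desired.

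There is no real obstacle here; the whole claim is a short piece of linear-arithmetic bookkeeping once one records the minimum-degree budget and the $(p-1)$-partiteness of $\Gamma[T]$. The only thing to be careful about is keeping the slack between the various constants large enough so that the final losses fit inside $\xi = 4p^2\alpha$, and since $\alpha$ has been chosen much smaller than $1/p^6$, there is ample room.
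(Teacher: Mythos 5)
Your proposal is correct and follows essentially the same argument as the paper: both use the defining minimum-degree condition on $T$ together with the independence of each $T_i$ to bound the part sizes, and then convert the leftover non-neighbor budget into the density estimate. The only cosmetic difference is that you prove the density bound (\ref{eq:pair-densities}) directly by counting non-neighbors of $w$ in $T\setminus T_i$ and dividing by $\ab{T_j}$, whereas the paper phrases the same count as a short contradiction; the constants differ slightly but both comfortably fit inside $\xi = 4p^2\alpha$.
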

\begin{proof}
    Since $T_i$ is an independent set, every vertex in $T_i$ has degree at most $N-\ab{T_i}$. Since every vertex in $T_i$ has degree at least $d$, this implies that $\ab{T_i} \leq N-d = (1/(p-1)+\alpha)N$. Since $T_1,\dots,T_{p-1}$ partition $T$, which has size at least $(1-\alpha)N$, this  implies that $\ab{T_i} = \ab{T} -\sum_{j \neq i} \ab{T_j} \geq (1/(p-1)-p \alpha)N$, which proves (\ref{eq:part-sizes}) since $p \alpha < \xi$.
    
    For (\ref{eq:pair-densities}), we recall that the induced subgraph $\Gamma[T]$ has minimum degree at least $(1-1/(p-1)-2 \alpha)\ab{T}$. So any $w \in T_i$ has at most $(1/(p-1)+2 \alpha)\ab T$ non-neighbors in $T$. Additionally, since $T_i$ is an independent set, every $w \in T_i$ has $\ab{T_i}-1$ non-neighbors in $T_i$. If $d(w,T_j)<1-\xi$, then the total number of non-neighbors of $w$ is at least
    \[
    \xi \ab{T_j} + \ab{T_i} - 1 \geq (1+\xi)\left( \frac{1}{p-1} - p \alpha\right)N - 1 > \left( \frac{1+\xi}{p-1} - 2p \alpha\right) N > \left(\frac 1{p-1} + 2\alpha\right)\ab{T},
    \]
    using the computations above and our choice of $\xi = 4p^2 \alpha$. This is a contradiction.
\end{proof}
Let $S$ be the complement of $T$, i.e.\ the set of vertices in $\Gamma$ with degree less than $d$, and recall that $\ab{S} \leq \alpha N$. 

\begin{claim}
Let $\zeta =pt \xi =  4p^3 t \alpha$. For every $v \in S$, at least one of the following is true. Either $v$ has no edges to some $T_i$, or else $d(v,T_i) < \zeta$ for at least two different choices of $i \in [p-1]$.
\end{claim}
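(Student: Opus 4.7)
I would prove the claim by contradiction. Suppose there is some $v \in S$ for which both alternatives fail: $v$ has at least one neighbor in every $T_i$, and the exceptional set $I = \{i \in [p-1] : d(v, T_i) < \zeta\}$ has cardinality at most $1$. After relabeling the parts, I may assume $I \subseteq \{1\}$, so that $d(v, T_i) \geq \zeta$ for every $i \in \{2, 3, \ldots, p-1\}$. The plan is then to construct a copy of $G = K_p(a_1, \ldots, a_p)$ inside $\Gamma$ that contains $v$, contradicting the $G$-freeness of $\Gamma$.

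The construction is a greedy embedding in the style of the one used at the end of the proof of \cref{thm:aes-variant}. Set $P_1 = \{v\}$ (the first part, of size $a_1 = 1$), and using the failure of the first alternative pick any $u \in T_1 \cap N(v)$ and let $P_2 = \{u\}$ (of size $a_2 = 1$). Then, for $j = 3, 4, \ldots, p$ in turn, I select $P_j \subseteq T_{j-1}$ of size $a_j$ whose vertices are common neighbors of the partial embedding $W_{j-1} := P_1 \cup \cdots \cup P_{j-1}$. Since each $T_i$ is an independent set in $\Gamma[T]$ and the parts $P_2, \ldots, P_p$ are placed in distinct $T_i$'s, the union $P_1 \cup \cdots \cup P_p$ is a copy of $G$ in $\Gamma$.

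The key calculation is to verify that the greedy step is feasible at each stage. At step $j$, every vertex $w \in W_{j-1} \setminus \{v\}$ lies in some $T_i$ with $i \neq j-1$, so by (\ref{eq:pair-densities}) it misses at most $\xi \ab{T_{j-1}}$ vertices of $T_{j-1}$; meanwhile, $v$ has at least $\zeta \ab{T_{j-1}}$ neighbors in $T_{j-1}$ because $j - 1 \geq 2$. By the union bound, the number of common neighbors of $W_{j-1}$ in $T_{j-1}$ is at least
\[
    \bigl(\zeta - (\ab{W_{j-1}} - 1)\xi\bigr) \ab{T_{j-1}}.
\]
Since $\ab{W_{j-1}} - 1 \leq 1 + (p-3)t$ and $\zeta = pt\xi$, this quantity is at least $(3t-1)\xi \ab{T_{j-1}} \geq 2t\xi \ab{T_{j-1}}$, and combined with $\ab{T_{j-1}} \geq N/(2(p-1))$ from (\ref{eq:part-sizes}), this produces at least roughly $t\xi n$ candidates. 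This comfortably exceeds $a_j$: for $j \leq p-1$ we need only $a_j \leq t$ vertices, while for $j = p$ we have $a_p \leq \delta n$, and $\delta$ was chosen far smaller than $t\xi = 4 p^2 t \alpha$.

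The main obstacle is the balancing of the constants: $\zeta$ must be large enough to absorb the cumulative deficit $(\ab{W_{j-1}} - 1)\xi$ generated by all the previously placed vertices, while simultaneously leaving a positive fraction of $\ab{T_{p-1}}$ available to accommodate the last and potentially largest part $P_p$ of size up to $\delta n$. This is exactly what the choices $\zeta = pt\xi$ and $\delta \leq 2^{-t^{1000 k^2 p^2}}$ arrange.
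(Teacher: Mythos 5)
Your proof is correct and follows essentially the same approach as the paper: assume for contradiction that $v$ has a neighbor in every $T_i$ and at most one part with density below $\zeta$, relabel so the exceptional part is a single specified index, pick a neighbor of $v$ in that part, and then greedily embed the remaining parts $P_3,\dots,P_p$ into the remaining $T_i$'s using the density guarantees $d(v,T_i)\geq\zeta$ and (\ref{eq:pair-densities}), with the leftover fraction $\zeta-(\ab{W_{j-1}}-1)\xi \geq (3t-1)\xi$ comfortably large enough to accommodate even the final part of size $a_p \leq \delta n$. The only difference from the paper is cosmetic: the paper relabels so the exceptional part is $T_{p-1}$ (placing $w$ there and embedding into $T_1,\dots,T_{p-2}$), whereas you relabel so the exceptional part is $T_1$ (placing $u$ there and embedding into $T_2,\dots,T_{p-1}$).
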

\begin{proof}
    Suppose for contradiction that this is false for some $v \in S$. Thus, $d(v,T_i)\geq \zeta$ for all but at most one choice of $i \in [p-1]$, and additionally $v$ has a neighbor in each $T_i$. By relabeling the parts, we may assume that $d(v,T_i) \geq \zeta$ for all $i \in [p-2]$. Let $w$ be a neighbor of $v$ in $T_{p-1}$. By (\ref{eq:pair-densities}), we see that $v$ and $w$ have at least $(\zeta-\xi)\ab{T_1} > \xi\ab{T_1} > a_3$ common neighbors in $T_1$. Pick any $a_3$ common neighbors in $T_1$. Then $v,w$, and these $a_3$ common neighbors have at least $(\zeta - (a_3+1)\xi)\ab{T_2} > \xi\ab{T_2} > a_4$ common neighbors in $T_2$. Continuing in this way, we can pick out $a_i$ vertices in $T_{i+2}$ which are common neighbors of all previously-chosen vertices. At the end of this process, we can still pick at least $(\zeta-(p-1)t\delta)\ab{T_{p-2}} \geq \delta n$ common neighbors in $T_{p-2}$, and thus we can build a copy of $G$, contradicting our assumption that $\Gamma$ is $G$-free.
\end{proof}
We partition $S$ into $S_1 \cup S_2$, where $S_1$ consists of all vertices in $S$ that are empty to some part $T_i$, and $S_2$ consists of the remaining vertices $v$, namely those satisfying $d(v,T_i)<\zeta$ for at least two choices of $1 \leq i \leq p-1$.

Now, we pick an index $i \in [p-1]$ uniformly at random, and then pick a $k$-set $Q \subset V_i$ uniformly at random. By doing so, we obtain a (non-uniform) distribution on the set of $k$-cliques in $\ol \Gamma$. For a vertex $v \in V(\Gamma)$, let us say that $v$ \emph{extends} $Q$ if $Q \cup \{v\}$ is also a clique in $\ol \Gamma$, or equivalently if $v$ is not adjacent in $\Gamma$ to any vertex of $Q$. Note that if $v \in Q$, then we still say that $v$ extends $Q$, even though this is not really an extension per se. We observe that if $v \in T$, then the probability that $v$ extends $Q$ is at least $1/(p-1)$. Indeed, the probability that $v$ extends $Q$ is at least the probability that $v \in T_i$ for the randomly chosen index $i$, which is exactly $1/(p-1)$ since we pick the index $i$ uniformly at random.

Next, if $v \in S_1$, then we again have that the probability that $v$ extends $Q$ is at least $1/(p-1)$. Indeed, if $v \in S_1$, then $v$ has no edges to $T_j$ for at least one index $j$. The probability that $v$ extends $Q$ is then at least the probability that $j$ is the randomly chosen index, which equals $1/(p-1)$.

Finally, if $v \in S_2$, then without loss of generality, $d(v,T_1) < \zeta$ and $d(v,T_2)<\zeta$. If the randomly chosen index $i$ is $1$ or $2$, then the probability that $v$ has an edge to $Q$ is at most $k\zeta$, by the union bound. Therefore, if $v \in S_2$, then
\[
\pr(v\text{ extends }Q) \geq \frac{2}{p-1}\cdot (1-k\zeta) \geq \frac{1}{p-1},
\]
since we chose $\alpha$ so that $k\zeta = 4kp^3t\alpha \leq 1/2$. By putting all of this together, we find that $\pr(v \text{ extends }Q) \geq 1/(p-1)$ for every vertex $v \in V(\Gamma)$. By linearity of expectation, this implies that
\[
\E[|\{v : v\text{ extends }Q\}|] = \sum_{v \in V(\Gamma)} \pr(v\text{ extends }Q) \geq \frac{N}{p-1}.
\]
Therefore, there exists some clique $Q$ in $\ol \Gamma$ which has at least $N/(p-1)> n-1$ extensions. Since exactly $k$ of these extensions are the degenerate ones coming from vertices in $Q$ itself, we find that $\ol \Gamma$ contains a copy of $H=\book kn$. This completes the proof.
\end{proof}

\section{Concluding remarks}\label{sec:concluding}

In this section we collect a few of the tantalizing open questions remaining in this area. 
\paragraph{Removing regularity.}
Note that the full Ramsey goodness results of Nikiforov and Rousseau~\cite{NR09} hold in greater generality than our results \cref{thm:main1} and \cref{thm:multipartite}. However, due to the dependence of their arguments on Szemer\'edi's regularity lemma, the quantitative dependence between the graph sizes involved are tower-type. It would be interesting to find a direct proof of their goodness results without regularity, as this would likely lead to superior quantitative bounds.

\paragraph{Near Ramsey goodness.}
In \cref{thm:multipartite}, we study the Ramsey number $r(K_p(a_1,\dots,a_p), \book kn)$ for sufficiently large $n$, where $a_1,\dots,a_{p-1}$ are fixed and $a_p \leq \delta n$ for some absolute constant $\delta>0$. We are able to determine this Ramsey number in the case $a_1 = a_2 = 1$ (in which case the answer is given by the Ramsey goodness bound), but it is natural to ask what happens for larger values of $a_1$ and $a_2$. In this case, there is a natural lower bound, generalizing the proof of the ``only if'' direction of \cref{thm:multipartite}, and which shows a surprising connection to an analogue of the classical extremal problem for complete bipartite graphs. To explain this connection, we first define the following Dirac-type extremal function.
\begin{definition}
Given a graph $H$ and integers $k,n$, let $d_k(n,H)$ be the maximum $d$ for which there is an $(n+d-1)$-vertex $H$-free graph, at most $k-1$ vertices of which have degree less than $d$.
\end{definition}
Now let $d = d_k(n, K_{a_1,a_2})$, and let $\Gamma_0$ be a $K_{a_1,a_2}$-free graph on $n+d-1$ vertices, at most $k-1$ of which have degree less than $d$. Let $\Gamma$ be a graph with $N=(p-1)(n-1)+d$ vertices, whose vertex set is divided into $p-1$ parts $U_1,\dots,U_{p-1}$ with $\ab{U_1} = n+d-1$ and $\ab{U_2} = \dotsb = \ab{U_{p-1}} = n-1$, such that $\Gamma[U_1]$ is isomorphic to $\Gamma_0$, and such that all pairs of vertices in different parts are adjacent. Then $\Gamma$ is $K_p(a_1,\dots,a_p)$-free, since $U_2,\dots,U_{p-1}$ are independent sets, and $\Gamma[U_1]$ is $K_{a_1,a_2}$-free. Additionally, $\ol \Gamma$ is a disjoint union of $\ol{\Gamma_0}$ and $p-2$ cliques of order $n-1$. The cliques are too small to contain a copy of $\book kn$, and all but at most $k-1$ vertices of $\ol{\Gamma_0}$ have degree at most $(n+d-1)-1-d = n-2$. Since $\book kn$ has $k$ vertices of degree $n-1$, this shows that $\ol \Gamma$ is $\book kn$-free. Thus, we conclude that
\begin{equation}\label{generalextremal}
r(K_p(a_1,\dots,a_p), \book kn) > (p-1)(n-1) + d_k(n,K_{a_1,a_2}).
\end{equation}
Our proof of \cref{thm:multipartite}\ref{item:only-if} used the same argument, and we simply noted that if $a_2>1$, then for sufficiently large $n$, we have $d_k(n,K_{a_1,a_2})\geq a_1$ for all $k \geq 2$. We conjecture that the lower bound (\ref{generalextremal}) is tight for sufficiently large $n$, if $a_1,\dots,a_{p-1}$ are fixed, and $a_p \leq \delta n$.
\begin{conjecture}\label{conj:dirac-type}
    For all integers $k,p,t\geq 2$, there exists some $\delta>0$ such that the following holds for all $n \geq 1$. For positive integers $a_1 \leq \dotsb \leq a_{p-1} \leq t$ and $a_p \leq \delta n$, we have
    \[
    r(K_p(a_1,\dots,a_p), \book kn) = (p-1)(n-1) + d_k(n,K_{a_1,a_2})+1.
    \]
\end{conjecture}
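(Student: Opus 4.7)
The lower bound is established by the construction in \eqref{generalextremal}, so it suffices to prove the matching upper bound: every $G$-free graph $\Gamma$ on $N := (p-1)(n-1) + d + 1$ vertices has $\ol \Gamma \supseteq \book kn$, where $G = K_p(a_1,\dots,a_p)$ and $d := d_k(n, K_{a_1,a_2})$. The plan is to extend the three-step strategy used in \cref{sec:proof-multipartite} to prove \cref{thm:multipartite}\ref{item:if}: first show that almost all vertices of $\Gamma$ have large degree (a variant of \cref{lem:low-deg-vtxs}); next extract a structural partition via an Andr\'asfai--Erd\H os--S\'os-type theorem (a variant of \cref{thm:aes-variant}); and finally use an averaging argument to locate the book in $\ol \Gamma$.

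The first step adapts the partition-based greedy embedding in the proof of \cref{lem:low-deg-vtxs}; the extra additive term $d$ in $N$ perturbs the bookkeeping only negligibly, since $d = o(n)$ for fixed $a_1, a_2, k$ by the K\H{o}v\'ari--S\'os--Tur\'an theorem applied to $K_{a_1,a_2}$. For the second step, the needed generalization of \cref{thm:aes-variant} reads as follows: if $\Gamma'$ is an $m$-vertex $K_p(a_1,\dots,a_p)$-free graph with minimum degree at least $(1 - 1/(p-1) - \alpha)m$, then $V(\Gamma')$ admits a partition $V_1 \sqcup \dotsb \sqcup V_{p-1}$ in which $V_2,\dots,V_{p-1}$ are independent sets and $V_1$ induces a $K_{a_1,a_2}$-free subgraph. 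Its proof should follow the template in \cref{sec:aes-variant}: apply \cref{thm:stability} to produce an approximate $(p-1)$-partition minimizing internal edges, show that cross-densities are nearly complete, and then argue that a $K_{a_1,a_2}$ inside $V_1$, combined with a greedy common-neighborhood argument across the other parts, would yield a copy of $G$---contradicting $G$-freeness. The conclusion that $V_1$ is an independent set in \cref{thm:aes-variant} is naturally replaced here by $V_1$ being $K_{a_1,a_2}$-free.

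The main obstacle lies in the final step. In the idealized output of the structural partition, $|V_1| = n + d$ and $|V_i| = n - 1$ for $i \geq 2$. By the maximality of $d$, any $K_{a_1,a_2}$-free graph on $n+d$ vertices has at least $k$ vertices of degree at most $d$; to build a $\book kn$ in $\ol \Gamma$, we need $k$ such vertices that additionally (i) form an independent set in $\Gamma$, and (ii) have combined $V_1$-neighborhood of size at most $d$, so that their $n-k$ common non-neighbors in $V_1$ serve as pages. The extremal definition of $d_k$ alone does not supply this refined structure, as it counts low-degree vertices without guaranteeing independence or overlapping neighborhoods. Closing this gap appears to require a new extremal lemma describing the low-degree vertex sets of near-extremal $K_{a_1,a_2}$-free graphs---for $a_1=a_2=2$, one might hope to distill such structure from the well-understood polarity graphs of projective planes, with inductive arguments on $a_1,a_2$ for the general case. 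Alternatively, one can attempt to exploit the slack provided by the exceptional vertices and missing cross-edges from the earlier steps, which contribute additional pages and may allow requirement (ii) to be weakened. Formulating and proving the correct refined extremal statement is the core challenge of the conjecture.
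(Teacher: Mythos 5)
The statement you address, \cref{conj:dirac-type}, is posed in the paper as an open conjecture; the authors prove only the lower bound coming from the construction yielding \eqref{generalextremal}, and note that \cref{thm:multipartite} resolves the special case $a_1=a_2=1$. There is thus no paper proof to compare against, and your proposal---which you present as a sketch, not a completed argument---is best read as an analysis of where the difficulty lies, an analysis I find essentially sound.

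You deserve credit for identifying the obstruction honestly: adapting the scheme of \cref{sec:proof-multipartite} does break down at the final averaging step for exactly the reason you give. The definition of $d=d_k(n,K_{a_1,a_2})$ certifies that a $K_{a_1,a_2}$-free graph on $n+d$ vertices has at least $k$ vertices of degree at most $d$, but gives no control over whether those $k$ vertices can be chosen pairwise non-adjacent in $\Gamma$, nor over how much their $V_1$-neighborhoods overlap; generically the union has size about $kd$ rather than $d$, leaving only about $n-k-(k-1)d$ common non-neighbors inside a part of size roughly $n+d$. Placing the whole spine in an independent part instead yields too few pages, because cross-densities are nearly complete and the slack of $d+1$ in $N$ is spread over $p-1$ parts. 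So, as you say, some genuinely new extremal lemma about the low-degree vertex structure of near-extremal $K_{a_1,a_2}$-free graphs seems unavoidable, and this is precisely why the statement remains a conjecture. One further caution: your proposed intermediate structure---a partition in which exactly one part is $K_{a_1,a_2}$-free and the others are independent---looks too strong when $a_1,a_2\geq 2$, since one can add a few $K_{a_1,a_2}$-free sets of edges inside each of \emph{two} parts of a complete $(p-1)$-partite graph and remain $K_p(a_1,\dots,a_p)$-free with high minimum degree. The natural output of the stability step is only that every part is $K_{a_1,a_2}$-free, so a full proof would also have to handle edges inside several parts at once.
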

\noindent Thus, \cref{thm:multipartite} verifies \cref{conj:dirac-type} in the case $a_1 = a_2 = 1$.
\paragraph{Disconnected graphs.}
Ramsey goodness results are some of the rare examples in graph Ramsey theory where exact values of Ramsey numbers are known. Another such example is an old result of Burr, Erd\H os, and Spencer~\cite{BuErSp}, recently improved by Buci\'c and Sudakov~\cite{BuSu}, which shows
\[
r(nG, nG) = 2(|V(G)| - \alpha(G))n + c
\]
for $n$ sufficiently large and some constant $c=c(G)$. Here, $G$ is a fixed graph, $nG$ is a vertex disjoint union of $n$ copies of $G$, and $\alpha(G)$ is the independence number of $G$. Does there exist a theory of Ramsey goodness for disconnected graphs, giving a common generalization of the Burr--Erd\H os--Spencer result and our theorems?

\paragraph{Empty pairs in triangle-free graphs.}
Motivated by a well-studied approach to the famous Erd\H os--Hajnal conjecture, the following conjecture was proposed by Conlon, Fox, and Sudakov.
    \begin{conjecture}[{\cite[Conjecture 3.14]{CoFoSu15}}] \label{conj:triangle-free}
        There exists some $\eps>0$ such that every $N$-vertex triangle-free graph contains two vertex subsets $A,B$ with $\ab A \geq \eps N$, $\ab B \geq N^\eps$, and with no edges between $A$ and $B$.
    \end{conjecture}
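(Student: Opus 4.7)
My plan is to attack Conjecture \ref{conj:triangle-free} by a case analysis on the maximum degree $\Delta$ of the triangle-free graph $\Gamma$ on $N$ vertices; write $\eps$ for the promised constant. The two extremes turn out to be easy, and all the difficulty concentrates in an intermediate regime.

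\textbf{High max degree.} If $\Delta \geq \eps N$, pick a vertex $v$ with $\deg(v) \geq \eps N$. Since $\Gamma$ is triangle-free, $N(v)$ is an independent set of size at least $\eps N$. Split $N(v)$ arbitrarily into a set $A$ of size $\eps N - N^\eps$ and a set $B$ of size $N^\eps$; both lie inside an independent set, so no edges run between them. \textbf{Low max degree.} If $\Delta \leq N^{1-2\eps}$, the Ajtai--Komlós--Szemerédi theorem supplies an independent set $I$ of size at least $c\sqrt{N\log N} \geq N^\eps$. Take any $B \subseteq I$ with $\lvert B\rvert = N^\eps$ and let $A := V(\Gamma) \setminus (B \cup N(B))$. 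Then $\lvert N(B)\rvert \leq \lvert B\rvert \cdot \Delta \leq N^{1-\eps}$, so $\lvert A\rvert \geq N - N^\eps - N^{1-\eps} \geq \eps N$, and no edges go between $A$ and $B$ by construction.

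\textbf{Intermediate regime.} The remaining case $N^{1-2\eps} \leq \Delta \leq \eps N$ is where the real difficulty lies, and I expect this to be the main obstacle. A natural attempt is to iterate: if $v$ has degree $d$ in this range, then $N(v)$ is an independent set of size $d$ and the induced subgraph on $V\setminus N[v]$ is triangle-free on at least $(1-\eps)N$ vertices. One would like to combine a suitable sub-independent-set of $N(v)$ (playing the role of $B$) with a large set of common non-neighbors chosen from the leftover (playing the role of $A$); alternatively, one could apply the two easy regimes recursively to the neighborhood and to the non-neighborhood and patch the outputs together. The trouble is that the parameters degrade geometrically with each recursion step, and standard bookkeeping collapses after $\Theta(\log N)$ levels.

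This degradation is not a coincidence: controlling precisely the intermediate-degree regime of triangle-free graphs is essentially equivalent to the Erd\H{o}s--Hajnal conjecture for $K_3$-free host graphs, a famous open problem, so any full proof must inject a genuinely new structural idea at exactly this step. A promising direction is to analyze the conjecture on the pseudorandom extremal constructions (Kim; Bohman--Keevash) and look for a bipartite-Ramsey or dependent-random-choice argument that exploits the fact that, in any triangle-free graph, the second neighborhood of a typical vertex must itself be fairly structured. Short of such a new idea, I expect only weaker versions of the conjecture (e.g.\ with $\lvert B\rvert$ polylogarithmic rather than $N^\eps$) to be provable from off-the-shelf tools.
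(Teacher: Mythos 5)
The statement you set out to prove is a \emph{conjecture}, not a theorem: the paper quotes it from Conlon--Fox--Sudakov \cite{CoFoSu15} and states outright that it remains open, so there is no proof in the paper to compare your attempt against. The paper's only contribution here is a partial result: applying Theorem~\ref{thm:main1} with $p=3$, the complement of any $N$-vertex triangle-free graph contains a book $\book kn$ with $n\approx N/2$ and $k\geq(\log N)^{1/30}$, which yields $\ab A \geq \eps N$ and $\ab B \geq (\log N)^\eps$ for some absolute constant $\eps>0$. This is far short of the conjectured $\ab B \geq N^\eps$.

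Your two extreme cases are correct and standard. If some vertex has at least $2\eps N$ neighbors, triangle-freeness makes its neighborhood an independent set, and you may carve $A$ and $B$ out of it directly. If $\Delta \leq N^{1-2\eps}$, a greedy independent set already has size at least $N/(\Delta+1)\geq N^\eps$; taking such a set as $B$, its neighborhood has size at most $\ab{B}\cdot\Delta \leq N^{1-\eps}$, so the remaining vertices form a valid $A$. You are also right that the intermediate regime $N^{1-2\eps}\lesssim \Delta \lesssim \eps N$ is where the whole problem lives; that gap is real, it is the open content of the conjecture, and your proposal does not close it (as you yourself acknowledge). The na\"{\i}ve recursion you sketch degrades exactly as you predict: each pass into the non-neighborhood of a chosen vertex costs a multiplicative factor, and after $\Theta(\log N)$ rounds one lands at polylogarithmic $\ab B$, which is precisely what the book-Ramsey bound from Theorem~\ref{thm:main1} delivers and where the state of the art currently sits. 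So this is an honest and essentially accurate account of why the conjecture is open, not a proof of it; nothing in the paper lets you go further, and the paper itself frames the matter the same way, noting that even improving the $n\geq 2^{k^{10p}}$ bound in Theorem~\ref{thm:main1} to single-exponential would only raise $\ab B$ to $\eps\log N$, still not $N^\eps$.
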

    For more on this conjecture and its variants, see also \cite{CFSSS}. \cref{conj:triangle-free} remains open. The strongest result in this direction, due independently to Fox and Shapira (unpublished) says that one may take $\ab A \geq \eps N$ and $\ab B \geq \eps\log N/\log \log N$. One consequence of \cref{thm:main1} is that we may take $\ab A \geq \eps N$ and $\ab B \geq (\log N)^\eps$, for $\eps = 1/31$. Indeed, \cref{thm:main1} with $p=3$ says that if $n \geq 2^{k^{10p}} = 2^{k^{30}}$ and if $N = (p-1)(n-1)+1 = 2n-1$, then for every $N$-vertex triangle-free graph $\Gamma$, its complement $\ol \Gamma$ contains a copy of $\book kn$. Let $A$ be the set of leaves of this book and $B$ be its spine, so that $\ab A = n \geq N/31$ and $\ab B = k \geq (\log N)^{1/31}$. Since $A \cup B$ span a book in $\ol \Gamma$, there are no edges between $A$ and $B$ in $\Gamma$.
    
    By the same argument, we see that improving the bounds in \cref{thm:main1} could yield progress on \cref{conj:triangle-free}. For example, improving the bound $n \geq 2^{k^{10p}}$ in \cref{thm:main1} to a bound that is single-exponential in both $k$ and $p$ would allow one to take $\ab A \geq \eps N$ and $\ab B \geq \eps \log N$ in \cref{conj:triangle-free}.

\paragraph{Ramsey goodness threshold.}    
 More generally, it is natural to ask what the ``Ramsey goodness threshold'' is in \cref{thm:main1}. That is, what is the smallest $n$ (in terms of $k$ and $p$) such that $r(K_p, \book kn) = (p-1)(n-1)+1$? A simple random construction shows that this threshold is at least $(k/\log p)^{cp}$, for an absolute constant $c>0$. Indeed, let $n = (k/\log p)^{cp}$ and $N = (p-1)(n-1)+1$, and let $\Gamma$ be an Erd\H os--R\'enyi random graph on $N$ vertices with edge probability\footnote{If the quantity $C(\log p)/k$ is greater than $1$, then the result we are trying to prove is vacuously true, since $(k/\log p)^{cp}$ is then less than $1$. Thus we may assume that this is a valid edge probability.} $C(\log p)/k$, for an absolute constant $C>0$. Then a first moment estimate shows that with positive probability, $\Gamma$ does not contain a copy of $K_p$ and its complement does not contain a copy of $\book kn$.
    
    However, there remains a rather large gap between the lower bound of $(k/\log p)^{cp}$ and the upper bound of $2^{k^{10p}}$ for this threshold. In particular, it would be interesting to determine if, for $p$ fixed, the correct behavior is polynomial or exponential in $k$.

\appendix 
\section*{Appendix}
\section{A complete proof of Theorem \ref{thm:removal-above-threshold}}\label{sec:appendix}

\cref{thm:removal-above-threshold} was proved in \cite[Theorem 2.1]{2105.09194}, but the explicit $p$-dependence on the constant was not computed there. So for completeness, we recreate here the proof of \cite[Theorem 2.1]{2105.09194}, while explicitly keeping track of the constant. As pertains to all of the main ideas, the proof is identical to that of \cite{2105.09194}.

We recall the statement of \cref{thm:removal-above-threshold}.
\begin{thm:removal-above-threshold}
	Let $\Gamma$ be an $m$-vertex graph with with minimum degree at least $(1- \frac{2}{2p-3}+\beta)m$ and with at most $(10p)^{-2p} \beta \lambda m^p$ copies of $K_p$. Then $\Gamma$ can be made $K_p$-free by deleting at most $\lambda m^2$ edges.
\end{thm:removal-above-threshold}

As in \cite{2105.09194}, it suffices to prove the following key lemma, which is a restatement of \cite[Lemma 2.3]{2105.09194} with an explicit constant.
\begin{lemma}\label{lem:removal-lemma}
	Let $p \geq 3$ and $\beta>0$. If $\Gamma$ is an $m$-vertex graph with minimum degree at least $(1-\frac{2}{2p-3} + \beta)m$, then every $K_p$ in $\Gamma$ contains an edge which lies in at least $\beta (m/(8p))^{p-2}$ copies of $K_p$.
\end{lemma}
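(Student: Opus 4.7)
My plan is to prove Lemma~\ref{lem:removal-lemma} by induction on $p$, passing from $p$ down to $p-2$ (or $p-1$) via common neighborhoods, with a direct base case computation for small $p$.

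For the base case $p=3$, I fix a triangle $K=\{v_1,v_2,v_3\}$ in $\Gamma$ and set $d_K(u)=|N(u)\cap V(K)|$ for each vertex $u$. By the minimum-degree hypothesis, $\sum_u d_K(u)=\sum_i|N(v_i)|\geq 3dm=(1+3\beta)m$. Since each $d_K(u)\in\{0,1,2,3\}$, a short integer-optimization argument (combining this with $\sum_u 1=m$) bounds $\sum_u\binom{d_K(u)}{2}\geq 3\beta m$. Because $\sum_u\binom{d_K(u)}{2}=\sum_{i<j}|N(v_i)\cap N(v_j)|$, some pair achieves $|N(v_i)\cap N(v_j)|\geq\beta m\geq\beta m/24=\beta(m/(8p))^{p-2}$, producing the required number of triangles through $v_iv_j$.

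For the inductive step with $p\geq 4$, given $K=\{v_1,\ldots,v_p\}$, the plan is to select an edge $v_a v_b$ of $K$ whose common neighborhood $H:=\Gamma[N(v_a)\cap N(v_b)]$ is both large and has enough minimum degree to apply the inductive hypothesis to the $(p-2)$-clique $K\setminus\{v_a,v_b\}$ sitting inside $H$. An averaging argument of the same flavor as the base case, applied to $\sum_{i<j}|N(v_i)\cap N(v_j)|=\sum_u\binom{d_K(u)}{2}$ together with Jensen's inequality and $\sum_u d_K(u)\geq pdm$, supplies a pair $(v_a,v_b)$ with $|N(v_a)\cap N(v_b)|\geq c_p m$ for a constant $c_p>0$. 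A standard intersection bound gives $\delta(H)\geq |V(H)|-(1-d)m$, which I would translate into a bound of the form $\delta(H)\geq(1-2/(2(p-2)-3)+\beta')|V(H)|$ for some $\beta'>0$. The inductive hypothesis applied in $H$ then yields an edge of $K\setminus\{v_a,v_b\}$ lying in $\beta'(|V(H)|/(8(p-2)))^{p-4}$ copies of $K_{p-2}$, each of which combines with $\{v_a,v_b\}$ to form a distinct copy of $K_p$ in $\Gamma$ through that edge.

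The main obstacle is the quantitative bookkeeping needed to make the induction close up correctly. I need to ensure both that the recursive slack $\beta'$ remains positive (so the inductive hypothesis is applicable) and that the final count $\beta'(|V(H)|/(8(p-2)))^{p-4}$ matches or exceeds the target $\beta(m/(8p))^{p-2}$; tracking the factor-of-$m^2$ between these two quantities requires that $|V(H)|\geq\Omega(m)$ and that the constants $c_p$ and $\beta\to\beta'$ evolution do not degrade too fast in $p$. Since the induction steps by $2$, I expect to need a separate direct argument for $p=4$ where the would-be recursion bottoms out at $p-2=2$ and the lemma as stated no longer applies; this would likely proceed by directly counting edges of $\Gamma[N(v_a)\cap N(v_b)]$ using the min-degree hypothesis.
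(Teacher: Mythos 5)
The base case $p=3$ looks fine (the averaging via $\sum_u\binom{d_K(u)}{2}$ is a reasonable way to find a pair with large common neighborhood), but the inductive step has a fundamental gap that cannot be repaired by bookkeeping. The issue is not a constant-factor loss but an exponent-in-$m$ mismatch. Applying the inductive hypothesis at level $p-2$ inside $H=\Gamma[N(v_a)\cap N(v_b)]$ produces an edge $e'$ of $K\setminus\{v_a,v_b\}$ that lies in at least $\beta'\bigl(|V(H)|/(8(p-2))\bigr)^{p-4}$ copies of $K_{p-2}$ \emph{within $H$}. Each such $K_{p-2}$, together with $v_a,v_b$, gives a copy of $K_p$ through $e'$, so you have shown that $e'$ lies in $\Omega(m^{p-4})$ copies of $K_p$. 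But the target is $\beta(m/(8p))^{p-2}=\Theta(m^{p-2})$, so you are short by a factor of order $m^2$ regardless of how large $|V(H)|$ is (even $|V(H)|=m$ does not help) and regardless of how well the constants and the $\beta\to\beta'$ evolution behave. The root cause is that the quantity the inductive hypothesis controls --- the number of $K_{p-2}$'s through a fixed edge of $H$ --- is intrinsically of order $m^{p-4}$, whereas the number of $K_p$'s through an edge of $\Gamma$ equals the number of \emph{all} $K_{p-2}$'s in that edge's common neighborhood, which can be as large as $m^{p-2}$. Your recursion forces every counted $K_p$ to also contain $v_a$ and $v_b$, which throws away exactly two degrees of freedom; iterating the recursion only compounds the loss. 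The same problem occurs if you step by $1$ instead of $2$.

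The paper's proof avoids this by not recursing on the lemma at all. Instead it proves, by induction on a counter $t$ from $0$ to $p-3$, that there is a subset $S_t\subseteq[p]$ of size $p-t$ such that $V_{S_t}=\bigcap_{i\in S_t}N(v_i)$ contains at least $(m/(8p))^t$ copies of $K_t$. Each inductive step removes one index from $S_t$ and simultaneously gains a factor of $\Theta(m)$ in the clique count, via an averaging/Markov argument in an auxiliary bipartite graph between $S_t$ and a slice of the common neighborhood; a final step, done separately to extract the $\beta$-dependence, finds the desired edge among the last three indices. This keeps the ambient graph equal to $\Gamma$ throughout, so the count of order $m^{p-2}$ is assembled additively rather than lost through nested neighborhoods. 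If you want to pursue an inductive proof closer in spirit to yours, you would need to strengthen the inductive statement so that it already tracks the full-dimensional count (i.e., something like ``every $K_p$ contains an edge whose common neighborhood holds $\Omega(m^{p-2})$ copies of $K_{p-2}$''), but then the recursion no longer reduces to a smaller instance of the same statement, and you are effectively back to the paper's direct counting argument.
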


In the proof of \cref{lem:removal-lemma}, we will use the following simple inequality, proved in \cite[Lemma 2.2]{2105.09194}.

\begin{lemma}\label{lem:calc-inequality}
	For any $x \geq 4$, we have that
	\[
		x \frac{2x-5}{2x-3} \geq x-2 + \frac 25.
	\]
\end{lemma}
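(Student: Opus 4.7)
The plan is to verify the inequality by a direct algebraic manipulation, reducing it to the hypothesis $x \geq 4$.

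First I would rewrite the left-hand side using the identity
\[
    \frac{2x-5}{2x-3} = \frac{(2x-3)-2}{2x-3} = 1 - \frac{2}{2x-3},
\]
so that
\[
    x \cdot \frac{2x-5}{2x-3} = x - \frac{2x}{2x-3}.
\]
With this rewriting, the claimed inequality $x \cdot \frac{2x-5}{2x-3} \geq x - 2 + \frac{2}{5}$ is equivalent to
\[
    \frac{2x}{2x-3} \leq \frac{8}{5}.
\]
Since $x \geq 4$ implies $2x-3 \geq 5 > 0$, I can clear denominators without reversing the inequality, obtaining the equivalent statement $10x \leq 8(2x-3) = 16x - 24$, which simplifies to $6x \geq 24$, i.e., $x \geq 4$. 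This is exactly the hypothesis, so the inequality holds.

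There is no real obstacle here; the entire argument is a short chain of equivalent reformulations that reduces the target inequality to the assumed lower bound on $x$. The only step worth being careful about is to confirm that $2x-3 > 0$ under the hypothesis (so that multiplying through by $2x-3$ preserves the direction of the inequality), which is immediate from $x \geq 4$.
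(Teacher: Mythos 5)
Your proof is correct: the rewriting $\frac{2x-5}{2x-3} = 1 - \frac{2}{2x-3}$ reduces the claim to $\frac{2x}{2x-3} \leq \frac{8}{5}$, which is equivalent to $x \geq 4$ after clearing the (positive) denominator. The paper does not reprove this lemma but cites it from an earlier work, and your direct algebraic verification is exactly the standard argument one would give.
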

We will also use the following inequality.
\begin{lemma}\label{lem:ineq-two}
    For all positive integers $p,t$ with $t \leq p-4$, we have that
    \[
    \frac{2p-3-2t}{5(2p-3)(p-t)} \geq \frac{1}{8p}.
    \]
\end{lemma}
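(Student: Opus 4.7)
The plan is to clear denominators and reduce to a simple polynomial inequality that is easily verified from the hypothesis $t \leq p-4$. First I would observe that all quantities in sight are positive: since $t \leq p-4$ we have $2p - 3 - 2t \geq 2p - 3 - 2(p-4) = 5 > 0$, and also $p - t \geq 4$ and $2p-3 \geq 5$ (the bound $t \leq p-4$ forces $p \geq 5$, since $t$ is a positive integer; otherwise the hypothesis is vacuous). Thus the inequality is equivalent to
\[
8p(2p - 3 - 2t) \geq 5(2p-3)(p-t).
\]

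The next step is straightforward expansion. The left-hand side equals $16p^2 - 24p - 16pt$ and the right-hand side equals $10p^2 - 15p - 10pt + 15t$. Subtracting and dividing the difference by $3$, the inequality reduces to
\[
2p^2 - 3p - 2pt - 5t \geq 0, \quad \text{i.e.,} \quad p(2p-3) \geq t(2p+5).
\]

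Finally, I would apply the hypothesis $t \leq p-4$ to bound the right-hand side:
\[
t(2p+5) \leq (p-4)(2p+5) = 2p^2 - 3p - 20 < p(2p-3),
\]
which gives the desired inequality (in fact strictly). There is essentially no obstacle here; the lemma is a purely algebraic manipulation, and the only mildly subtle point is checking that the hypothesis $t \leq p-4$ is precisely strong enough to make the rearranged inequality hold — any weaker assumption like $t \leq p - 3$ would fail for small $p$, since $(p-3)(2p+5) = 2p^2 - p - 15$ exceeds $p(2p-3) = 2p^2 - 3p$ once $p \geq 8$.
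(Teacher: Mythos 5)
Your proof is correct. Clearing denominators leads to $16p^2 - 24p - 16pt \geq 10p^2 - 15p - 10pt + 15t$, which after dividing by $3$ gives $p(2p-3) \geq t(2p+5)$, and then $t \leq p-4$ (together with $2p+5>0$) yields $t(2p+5) \leq (p-4)(2p+5) = 2p^2-3p-20 < p(2p-3)$. That is a valid chain. The paper's proof is structurally different: rather than expanding, it fixes $p$ and regards the left-hand side as a function $f(t) = (2p-3-2t)/[5(2p-3)(p-t)]$, computes $f'(t) = -3/[5(2p-3)(p-t)^2] < 0$, and concludes that the minimum over the admissible range occurs at the endpoint $t = p-4$, where the value simplifies to $1/(8p-12) \geq 1/(8p)$. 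The paper's route buys a very clean endpoint check with essentially no algebraic expansion, at the cost of a (small) calculus argument; your route is entirely elementary and self-contained but requires the polynomial bookkeeping. Both are fine. One minor slip in your closing remark: you say $t \leq p-3$ ``would fail for small $p$,'' but the quantity $(p-3)(2p+5)$ exceeds $p(2p-3)$ precisely for $p \geq 8$, so the failure under the weaker hypothesis occurs for \emph{large} $p$, not small.
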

\begin{proof}
    For fixed $p$, consider the function $f(t)=(2p-3-2t)/[5(2p-3)(p-t)]$. Its derivative is $f'(t) = -3/[5(2p-3)(p-t)^2]$, which is strictly negative since $p \geq t+4 \geq 4$. So it suffices to prove the lemma for $t=p-4$, where we have
    \[
    \frac{2p-3-2t}{5(2p-3)(p-t)}=\frac{2p-3-2(p-4)}{20(2p-3)} = \frac{5}{20(2p-3)} = \frac{1}{8p-12}\geq \frac{1}{8p}.\qedhere
    \]
\end{proof}
With these preliminaries, we are ready to prove \cref{lem:removal-lemma}, and thus \cref{thm:removal-above-threshold}.

\begin{proof}[Proof of \cref{lem:removal-lemma}]
 The result is vacuously true if there is no $K_p$ in $\Gamma$, so we may assume that there is at least one copy of $K_p$ in $\Gamma$. 
 Let $v_1,\dotsc,v_p \in V(\Gamma)$ span a copy of $K_p$ in $\Gamma$. For $i \in [p]$, let $V_i = N(v_i)$ denote the neighborhood of $v_i$. By the minimum degree assumption on $\Gamma$, we have that $\ab{V_i} \geq (\frac{2p-5}{2p-3} + \beta)m$ for each $i$. 
 
 For every $S \subseteq [p]$, we define $V_S \coloneqq \bigcap_{i \in S} V_i$.
 We will prove the following claim by induction.

	\begin{claim} 
		For each integer $0 \leq t \leq p-3$, there exists a set $S_t \subseteq [p]$ of size $\ab{S_t} = p-t$ such that $V_{S_t}$ contains at least $(m/(8p))^{t}$ copies of $K_t$.
	\end{claim}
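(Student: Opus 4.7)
The plan is to prove the claim by induction on $t$. The base case $t = 0$ is immediate: taking $S_0 = [p]$, the set $V_{S_0}$ trivially contains one copy of $K_0$ (the empty graph), which matches $(m/(8p))^0 = 1$.

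For the inductive step, given $S_t$ of size $p - t$ with $V_{S_t}$ containing at least $N_t := (m/(8p))^t$ copies of $K_t$, the goal is to find $S_{t+1} = S_t \setminus \{i\}$ for some $i \in S_t$ such that $V_{S_{t+1}}$ contains at least $N_{t+1} := N_t \cdot m/(8p)$ copies of $K_{t+1}$. The strategy is to average over the $p-t$ choices of $i$. The key double counting identifies, for each $i$, the copies of $K_{t+1}$ in $V_{S_t \setminus \{i\}}$ with extensions $(K, w)$ of a $K_t$-copy $K \subseteq V_{S_t}$ by a vertex $w \in N(K) \cap V_{S_t \setminus \{i\}}$, with overcounting factor at most $t + 1$. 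Summing over $i$, a vertex $w$ contributes weight $p-t$ if $w \in V_{S_t}$ (i.e., it misses no $v_j$-constraint for $j \in S_t$), weight $1$ if $w$ misses exactly one such constraint, and $0$ otherwise; denote the corresponding per-$K$ counts by $a_0(K)$ and $a_1(K)$.

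The main technical step is to lower bound the weighted total $\sum_K [(p-t)a_0(K) + a_1(K)]$ using the minimum-degree hypothesis. Applying \cref{lem:calc-inequality} with $x = p$ to the $p$ constraint vertices $K \cup \{v_j : j \in S_t\}$, each of degree at least $(1 - \frac{2}{2p-3})m$, yields a $2/5$ slack beyond the trivial Markov bound on the number of ``nearly common neighbors.'' With careful accounting to distinguish constraints missed inside $K$ from constraints missed among the $v_j$'s, I expect to arrive at a per-$K$ weighted bound of roughly $\frac{(t+1)(2p-3-2t) m}{5(2p-3)}$. Summing over the $N_t$ copies of $K_t$ and dividing by $(p-t)(t+1)$ (for averaging over $i$ and for multiplicity in the extension counting), the best $i$ yields $\#K_{t+1}(V_{S_t \setminus \{i\}}) \geq \frac{(2p-3-2t) N_t m}{5(2p-3)(p-t)}$. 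Since the induction only advances from $t$ to $t+1$ for $t \leq p-4$, \cref{lem:ineq-two} then gives $\frac{2p-3-2t}{5(2p-3)(p-t)} \geq \frac{1}{8p}$, yielding the desired $\#K_{t+1} \geq N_t \cdot m/(8p) = (m/(8p))^{t+1}$.

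The main obstacle I foresee is establishing the per-$K$ weighted-count estimate. The quantity $\ab{N(K) \cap V_{S_t}}$ (the number of ``zero-miss'' extension vertices) may itself be quite small when $p-t$ is large, since $V_{S_t}$ is a common neighborhood of $p-t$ near-complete (but not near-full) sets, so the weight $p-t$ attached to these vertices cannot by itself drive the bound. One must instead combine the zero-miss and one-miss vertex counts, and this is precisely where the $2/5$ margin from \cref{lem:calc-inequality} is essential beyond what Markov's inequality alone provides.
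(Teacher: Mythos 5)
Your proposal shares the paper's inductive skeleton (same base case, same averaging over $p-t$ choices of $S_{t+1}$, same target inequality $\#K_{t+1}\ge N_t\cdot s/(5(p-t))$ followed by \cref{lem:ineq-two}), but the central technical step diverges from the paper's in two ways that create genuine gaps.

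First, you apply \cref{lem:calc-inequality} with $x=p$ to the $p$ ``constraint vertices'' $K\cup\{v_j:j\in S_t\}$, treating them symmetrically over all of $V(\Gamma)$. But the two kinds of constraints play asymmetric roles: a vertex $w$ that is non-adjacent to some $v_j$ is still a usable extension (it just lands in $V_{S_t\setminus\{j\}}$ rather than $V_{S_t}$), whereas a vertex that is non-adjacent to a vertex of $K$ is useless since then $K\cup\{w\}$ is not a clique. The Markov step with $x=p$ over all of $V(\Gamma)$ gives $\geq m/5$ vertices with at most one miss, but among these, the ones whose unique miss lies in $K$ can number up to $t\cdot\frac{2}{2p-3}m$, which already exceeds $m/5$ once $t$ is a constant fraction of $p$. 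The ``careful accounting'' you defer is not a bookkeeping detail but the heart of the matter, and the way the paper handles it is to \emph{first} pass to an $s$-element subset $C'\subseteq N(Q)$ of common neighbors of $Q$ (so misses to $K$ are excluded at the outset), and \emph{then} apply the calculation with $x=p-t$ to the bipartite auxiliary graph on $S_t\times C'$. The switch from $x=p$ to $x=p-t$ is not cosmetic; it is what makes the Markov step land on $s/5$ rather than on something negative.

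Second, you explicitly introduce a $(t+1)$ overcounting factor for the map $(K,w)\mapsto K\cup\{w\}$ and plan to pay for it by proving the per-$K$ weighted bound $(p-t)a_0(K)+a_1(K)\ge(t+1)s/5$. This is strictly stronger than what the paper establishes, which is only $a_0(Q)+a_1(Q)\ge s/5$ over $C'$, and I do not see how to prove your version. If every vertex of $C'$ happens to have exactly one miss among $\{v_j:j\in S_t\}$ — which the minimum-degree hypothesis does not rule out — then $a_0(K)=0$ and the left side is $a_1(K)\le s$, falling short of $(t+1)s/5$ as soon as $t\ge4$. The paper's writeup simply identifies each pair $(Q,w)$ with the clique $Q\cup\{w\}$ and does not carry the $(t+1)$ factor; the overcounting you worry about can affect only copies of $K_{t+1}$ lying entirely inside $V_{S_t}$, which are already present in $V_{S_{t+1}}$ regardless. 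In short: your framing introduces a loss you then cannot recover, while the paper's pre-trimming to $C'$ and its use of $x=p-t$ produce the needed bound without that loss.
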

	\begin{proof}[Proof of claim]
		The base case $t=0$ is trivial, since we simply take $S_0 = [p]$.

 		Inductively, suppose we have found such a set $S_t$, for $t \leq p-4$. Let $Q$ be a copy of $K_t$ in $V_{S_t}$.  Since every vertex in $Q$ has degree at least $(\frac{2p-5}{2p-3} + \beta)m$, there are at most $(\frac{2}{2p-3} - \beta)m < \frac{2}{2p-3}m$ vertices not adjacent to any given vertex in $Q$. Thus, if we let $C$ be the common neighborhood of the vertices of $Q$, we find that $C$ has order at least
		\[
			s \coloneqq m - t \left(\frac{2}{2p-3} m \right) = \frac{2p - 3 -2t}{2p-3} m.
		\]
		Note that for any vertex $v \in V(\Gamma)$ and for any set $C' \subseteq V(\Gamma)$ with $\ab{C'}=s$, the number of edges between $v$ and $C'$ is at least
		\[
			\left(\frac{2p-5}{2p-3} + \beta \right)m - (m-s) > \frac{2p-5 - 2t}{2p-3}m = \frac{2p-5-2t}{2p-3-2t}s.
		\]
  We now define an auxiliary bipartite graph $B_t$, as follows. Its first part has $p-t$ vertices, which we identify with $S_t$. Its second part consists of an arbitrary subset $C' \subseteq C$ with $\ab{C'}=s$. We declare that a pair $(i,v) \in S_t \times C'$ is an edge of $B_t$ if $v \in V_i$.
  
  By the computation above, each vertex in the first part of $B_t$ has degree at least $\frac{2p-5-2t}{2p-3-2t}s$.
		The first part of $B_t$ has $\ab{S_t}=p-t$ vertices. Hence, the average degree in the second part of $B_t$ is at least $(p-t)\frac{2p-5-2t}{2p-3-2t}$, which is at least $p-t-2+ \frac25$, by \cref{lem:calc-inequality} applied to $x=p-t$ and using the fact that $t \leq p-4$, which implies that $x\geq 4$. Now, Markov's inequality implies that at least $s/5$ vertices in $C'$ have degree at least $p-t-1$ in the graph $B_t$. 

  Recall that the argument above worked for any fixed choice of a clique $Q$ in $V_{S_t}$, and by the induction assumption, there are at least $(m/(8p))^t$ choices for such a $Q$.
  Therefore, there are at least $(m/(8p))^t \cdot (s/5)$ choices of a clique $Q\subseteq V_{S_t}$, and a common neighbor of $Q$ that lies in at least $p-t-1$ of the sets $V_i$ for $i \in S_t$. Averaging over all subsets of $S_t$ of order $p-t-1$, we conclude that for at least $(m/(8p))^t \cdot s/ (5 (p-t))$ of these choices, the same $(p-t-1)$-subset of $S_t$ is used. We let $S_{t+1}$ be this subset. By definition, the number of copies of $K_{t+1}$ in $V_{S_{t+1}}$ is at least $(m/(8p))^t \cdot s/ (5 (p-t))$.
  To complete the proof of the claim, we note that by \cref{lem:ineq-two},
  \[
  \left(\frac m{8p}\right)^t \cdot \frac{s}{5(p-t)} = \frac{m^{t+1}}{(8p)^t} \cdot \frac{2p-3-2t}{5(2p-3)(p-t)} \geq \frac{m^{t+1}}{(8p)^t} \cdot \frac{1}{8p} = \left(\frac{m}{8p}\right)^{t+1}.\qedhere
  \]
	\end{proof}

	To conclude, we use essentially the same argument for $t=p-3$, except that now we need to keep track of the parameter $\beta$. Let $S = S_{p-3}$ be the set given by the claim for $t=p-3$. As before, fix a copy $Q$ of $K_{p-3}$ in $V_S$. Let $B$ be the auxiliary bipartite graph constructed as above:  its first part has three vertices, labeled by the elements of $S$, and its second part consists of $s$ arbitrary common neighbors of $V(Q)$, where
	\[
		s \coloneqq m - (p-3) \left( \frac{2}{2p-3} - \beta \right) m = \left(\frac{3}{2p-3} + (p-3) \beta\right)m.
	\]
	By the same argument as above, each vertex in the first part of $B$ has degree at least
	\begin{align*}
		\left(\frac{2p-5}{2p-3} + \beta \right)m - (m-s) &= \left( \frac{1}{2p-3} + (p-2) \beta \right) s \\
		&= \frac{\frac{1}{2p-3}+(p-2)\beta}{\frac{3}{2p-3}+(p-3) \beta}s\\
		&=\frac{1+(p-2)(2p-3)\beta}{3+(p-3)(2p-3) \beta} s.
	\end{align*}
    Note that the minimum degree of $\Gamma$ is at most $m$, which implies that $\beta \leq 2/(2p+3)$. This implies that
    \[
    \frac{1+(p-2)(2p-3)\beta}{3+(p-3)(2p-3) \beta} = \frac 13 + \frac{(2p-3)^2}{3\beta(p-3)(2p-3)+9}\beta \geq \frac 13 + \frac{2p-3}{3}\beta.
    \]
    Therefore, the average degree in the second part of $B$ is at least $1+(2p-3)\beta$. Markov's inequality again tells us that at least $\frac {2p-3}2 \beta s$ vertices in this part have at least two neighbors in the first part. Hence, there are at least $(m/(8p))^{p-3}\cdot \frac {2p-3}2 \beta s$ choices of a $K_{p-3}$ in $V_S$ and a vertex in its common neighborhood which lies in at least two of the three sets $V_i$ for $i \in S$. Averaging over the three possible $2$-subsets of $S$, there is some $\{i,j\} \subset S$ such that $V_i \cap V_j$ contains at least $(m/(8p))^{p-3}\cdot \frac {2p-3}6 \beta s$ copies of $K_{p-2}$. We now compute that
	\[
  \left(\frac{m}{8p}\right)^{p-3} \cdot \frac{2p-3}{6}\beta s = \frac{m^{p-2}}{(8p)^{p-3}} \cdot \frac{2p-3}{6}\left(\frac{3}{2p-3} + (p-3) \beta\right)\beta \geq \frac{m^{p-2}}{(8p)^{p-3}} \cdot \frac \beta 2 \geq \beta \left(\frac{m}{8p}\right)^{p-2}.
	\]
	Recalling the definitions of $V_i,V_j$, we see that the edge $\{v_i,v_j\}$ in our original $K_p$ lies in at least $\beta (m/(8p))^{p-2}$ copies of $K_p$, as claimed.
\end{proof}

Finally, we are ready to prove \cref{thm:removal-above-threshold}. As in \cite{2105.09194}, the theorem follows quickly from \cref{lem:removal-lemma}.
\begin{proof}[Proof of \cref{thm:removal-above-threshold}]
    Let $\delta = (10p)^{-2p} \beta \lambda$. Let $\Gamma$ be an $m$-vertex graph with minimum degree at least $(\frac{2p-5}{2p-3}+\beta)m$ and with at most $\delta m^p$ copies of $K_p$.

	Let $E^*$ denote the set of edges in $\Gamma$ which lie in at least $ \beta (m/(8p))^{p-2}$ copies of $K_p$. Each copy of $K_p$ in $\Gamma$ contains at most $\binom p2$ edges from $E^*$, and hence the number of $K_p$ in $\Gamma$ is at least $\binom p2 ^{-1} \beta (m/(8p))^{p-2} \ab{E^*}$. On the other hand, we assume that $\Gamma$ has at most $\delta m^p$ copies of $K_p$.  Combining these two bounds, we see that
	\[
		\ab{E^*} \leq \frac{\delta m^p}{\binom p2 ^{-1} \beta (m/(8p))^{p-2}} = \frac{\delta \binom p2 (8p)^{p-2}}{\beta} m^2 = \frac{\binom p2 (8p)^{p-2} }{(10p)^{2p}} \lambda m^2 \leq \lambda m^2.
	\]
	Additionally, by \cref{lem:removal-lemma}, we know that every $K_p$ in $\Gamma$ contains at least one edge from $E^*$. Said differently, $G \setminus E^*$ is a $K_p$-free graph. As $\ab{E^*} \leq\lambda m^2$ edges, this completes the proof.
\end{proof}

\section*{Acknowledgments} 
We would like to thank Vladimir Nikiforov for bringing \cite[Theorem 9]{BoNi} to our attention, which can be used to give an alternative proof of \cref{thm:stability}. We would also like to thank the anonymous referees for carefully reading the paper and making a number of helpful suggestions.

\bibliographystyle{amsplain}


\begin{aicauthors}
\begin{authorinfo}[jfox]
  Jacob Fox\\
  Stanford University\\
  Stanford, CA 94305\\
  jacobfox@stanford.edu \\
  \url{https://stanford.edu/~jacobfox/}
\end{authorinfo}
\begin{authorinfo}[xhe]
  Xiaoyu He\\
  Princeton University\\
  Princeton, NJ 08540 \\
  xiaoyuh@princeton.edu \\
  \url{https://alkjash.github.io/}
\end{authorinfo}
\begin{authorinfo}[ywig]
  Yuval Wigderson\\
  Tel Aviv University \\
  Tel Aviv 69978, Israel \\
  yuvalwig@tauex.tau.ac.il \\
  \url{http://www.math.tau.ac.il/~yuvalwig/}
\end{authorinfo}
\end{aicauthors}

\end{document}